\newtheorem{theorem}{Theorem}
\newtheorem{ex}{Example}
\newtheorem{lemma}[theorem]{Lemma}
\newtheorem{prop}[theorem]{Proposition}
\newtheorem{remark}{Remark}
\newtheorem{corollary}[theorem]{Corollary}
\newenvironment{proof-sketch}{\noindent{\bf Sketch of Proof}\hspace*{1em}}{\qed\bigskip}
\newcommand{\RR}{\mathbb R}
\newcommand{\NN}{\mathbb N}
\renewcommand{\leq}{\leqslant}
\renewcommand{\geq}{\geqslant}  
\begin{document}
\title[Nonlinear nonhomogeneous Robin problems]{Nodal solutions for nonlinear nonhomogeneous Robin problems}
\author[N.S. Papageorgiou]{Nikolaos S. Papageorgiou}
\address[N.S. Papageorgiou]{National Technical University, Department of Mathematics,
				Zografou Campus, Athens 15780, Greece \& Institute of Mathematics, Physics and Mechanics, Jadranska 19, 1000 Ljubljana, Slovenia}
\email{\tt npapg@math.ntua.gr}
\author[V.D. R\u{a}dulescu]{Vicen\c{t}iu D. R\u{a}dulescu}
\address[V.D. R\u{a}dulescu]{Institute of Mathematics, Physics and Mechanics, Jadranska 19, 1000 Ljubljana, Slovenia \& Faculty of Applied Mathematics, AGH University of Science and Technology, 30-059 Krak\'ow, Poland \& Department of Mathematics, University of Craiova, 200585 Craiova, Romania}
\email{\tt vicentiu.radulescu@imfm.si}
\author[D.D. Repov\v{s}]{Du\v{s}an D. Repov\v{s}}
\address[D.D. Repov\v{s}]{Faculty of Education and Faculty of Mathematics and Physics, University of Ljubljana \& Institute of Mathematics, Physics and Mechanics, Jadranska 19, 1000 Ljubljana, Slovenia}
\email{\tt dusan.repovs@guest.arnes.si}
\keywords{Nodal solutions, indefinite potential, nonhomogeneous differential operator, nonlinear regularity theory, truncation and cut-off techniques\\
\phantom{aa} 2010 AMS Subject Classification: 35J20, 35J60}
\begin{abstract}
We consider the nonlinear Robin problem driven by a nonhomogeneous differential operator plus an indefinite potential. The reaction term is a Carath\'eodory function satisfying certain conditions only near zero. Using suitable truncation, comparison, and cut-off techniques, we show that the problem has a sequence of nodal solutions converging to zero in the $C^1(\overline{\Omega})$-norm.
\end{abstract}
\maketitle
\section{Introduction}
Let $\Omega\subseteq\RR^N$ be a bounded domain with a $C^2$-boundary $\partial\Omega$. We study the following nonlinear nonhomogeneous Robin problem:
\begin{equation}\label{eq1}
	\left\{
		\begin{array}{l}
			-{\rm div}\, a(Du(z)) + \xi(z)|u(z)|^{p-2}u(z) = f(z,u(z))\ \mbox{in}\ \Omega,\\
			\frac{\partial u}{\partial n_a} + \beta(z)|u|^{p-2}u=0\ \mbox{on}\ \partial\Omega.
		\end{array}
	\right\}
\end{equation}
In this problem, $a:\RR^N\rightarrow\RR^N$ is a continuous and strictly monotone map (thus also maximal monotone), which satisfies certain regularity and growth conditions listed in hypotheses $H(a)$ below. These conditions are general and they incorporate in our framework many differential operators of interest, such as the $p$-Laplacian and the $(p,q)$-Laplacian. We stress that $a(\cdot)$ is not homogeneous and this is a source of difficulties in the study of problem (\ref{eq1}). The potential function $\xi\in L^\infty(\Omega)$ is indefinite (that is, sign changing). The reaction term (the right-hand side of (\ref{eq1})) is a Carath\'eodory function (that is, for all $x\in\RR$, the function $z\mapsto f(z,x)$ is measurable, and for almost all $z\in\Omega$, the function $x\mapsto f(z,x)$) is continuous. We impose conditions on $f(z,\cdot)$ only near zero. In the boundary condition, $\frac{\partial u}{\partial n_a}$ denotes the conormal derivative corresponding to the differential operator $u\mapsto {\rm div}\,a(Du)$ and is defined by extension of the map
$$C^1(\overline{\Omega})\ni u\mapsto (a(Du),n)_{\RR^N},$$
with $n(\cdot)$ being the outward unit normal on $\partial\Omega$.

We are looking for nodal (that is, sign-changing) solutions for problem (\ref{eq1}). Employing a symmetry condition on $f(z,\cdot)$ near zero and using truncation, perturbation, comparison, and cut-off techniques, and a result of Kajikiya \cite{7}, we generate a whole sequence $\{u_n\}_{n\geq1}\subseteq C^1(\overline{\Omega})$ of distinct nodal solutions such that $u_n\rightarrow0$ in $C^1(\overline{\Omega})$.

The first result in this direction was produced by Wang \cite{26}, who used cut-off techniques to produce an infinity of solutions converging to zero in $H^1_0({\Omega})$. In Wang \cite{26} the problem is semilinear driven by the Dirichlet Laplacian. There is no potential term (that is, $\xi\equiv0$). The sequence produced by Wang \cite{26} does not consist of nodal solutions. More recently, Li \& Wang \cite{9} produced a sequence of nodal solutions for semilinear Schr\"{o}dinger equations. For nonlinear equations we mention the recent works of He, Huang, Liang \& Lei \cite{5}, and Papageorgiou \& R\u{a}dulescu \cite{18}. In He {\it et al.} \cite{5}, the problem is Neumann (that is, $\beta\equiv0$) and the differential operator is the $p$-Laplacian (that is, $a(y)=|y|^{p-2}y$ for all $y\in\RR^N$, with $1<p<\infty$). In Papageorgiou \& R\u{a}dulescu \cite{18}, the differential operator is the same as in the present paper, but $\xi\equiv0$. Also, the hypotheses on $f(z,\cdot)$ near zero are more restrictive. In the present paper we extend the results of all aforementioned works.

\section{Preliminaries and Hypotheses}

In the study of problem (\ref{eq1}) we will use the following spaces: the Sobolev space $W^{1,p}(\Omega)$, the Banach space $C^1(\overline{\Omega})$, and the boundary Lebesgue spaces $L^r(\partial\Omega)$, $1\leq r\leq\infty$.

We denote by $||\cdot||$  the norm on the Sobolev space $W^{1,p}(\Omega)$ defined by
$$
||u||=\left[||u||^p_p + ||Du||^p_p\right]^{\frac{1}{p}}\ \mbox{for all}\ u\in W^{1,p}(\Omega).
$$

The Banach space $C^1(\overline\Omega)$ is an ordered Banach space, with positive (order) cone $$C_+=\left\{u\in C^1(\overline{\Omega}):u(z)\geq0\ \mbox{for all}\ z\in\overline{\Omega}\right\}.$$ This cone has a nonempty interior which contains the open set
$$
D_+=\{u\in C_+:u(z)>0\ \mbox{for all}\ z\in\overline{\Omega}\}.
$$

In fact, $D_+$ is the interior of $C_+$ when furnished with the relative $C(\overline{\Omega})$-norm topology.

On $\partial\Omega$ we consider the $(N-1)$-dimensional Hausdorff (surface) measure $\sigma(\cdot)$. Using this measure, we can define in the usual way the Lebesgue spaces $L^r(\partial\Omega), 1\leq r\leq\infty$. From the theory of Sobolev spaces we know that there exists a unique continuous linear map $\gamma_0:W^{1,p}(\Omega)\rightarrow L^p(\partial\Omega)$, known as the ``trace map", such that
$$
\gamma_0(u)=u|_{\partial\Omega}\ \mbox{for all}\ u\in W^{1,p}(\Omega)\cap C(\overline{\Omega}).
$$

So, the trace map assigns ``boundary values" to all Sobolev functions. We know that the trace map is compact into $L^r(\partial\Omega)$ for all $1\leq r<\frac{(N-1)p}{N-p}$ if $p<N$, and into $L^r(\partial\Omega)$ for all $1\leq r<\infty$ if $p\geq N$. Furthermore, we have that
$$
{\rm ker}\,\gamma_0=W^{1,p}_0(\Omega)\ \mbox{and}\ {\rm im}\,\gamma_0=W^{\frac{1}{p'},p}(\partial\Omega)\quad \left(\frac{1}{p}+\frac{1}{p'}=1\right).
$$

In what follows, for the sake of notational simplicity, we will drop the use of the trace map $\gamma_0(\cdot)$. All restrictions of Sobolev functions on $\partial\Omega$, are understood in the sense of traces.

Let $X$ be a Banach space and $\varphi\in C^1(X,\RR)$. We say that $\varphi$ satisfies the ``Palais-Smale condition" (the ``PS-condition" for short), if the following property holds:
\begin{center}
``Every sequence $\{u_n\}_{n\geq1}\subseteq X$ such that\\
$\{\varphi(u_n)\}_{n\geq1}\subseteq\RR$ is bounded and
$\varphi'(u_n)\rightarrow0$ in $X^{*}$ as $n\rightarrow\infty$,\\
admits a strongly convergent subsequence."
\end{center}

\smallskip
We shall need the following result of Kajikya \cite{7}.
\begin{theorem}\label{th1}
	Assume that $X$ is a Banach space, $\varphi\in C^1(X,\RR)$ satisfies the PS-condition, $\varphi$ is even and bounded below, $\varphi(0)=0$, and for every $n\in\NN$, there exists an $n$-dimensional subspace $V_n$ of $X$ and $\rho_n>0$ such that
	$$
	\sup\{\varphi(u):u\in V_n\cap\partial B_{\rho_n}\}<0,
	$$
	where $\partial B_{\rho_n}=\{u\in X:||u||_X=\rho_n\}$. Then there exists a sequence $\{u_n\}_{n\geq1}\subseteq X\backslash\{0\}$ such that
	$$
	\begin{array}{lll}
	&(i)&\	\varphi'(u_n)=0\ \mbox{for all}\ n\in\NN\ \mbox{(that is, each $u_n$ is a critical point of $\varphi$)}; \\
	&(ii)&\	\varphi(u_n)\leq 0\ \mbox{for all}\ n\in\NN;\ and \\
	&(iii)&\	u_n\rightarrow 0\ \mbox{in}\ X\ \mbox{as}\ n\rightarrow\infty.
	\end{array}
	$$
\end{theorem}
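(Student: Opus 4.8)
The plan is to establish Theorem~\ref{th1} by combining the Krasnoselskii genus with the equivariant (odd) deformation lemma, and then to extract the required sequence from the way the genus of the negative sublevel sets of $\varphi$ blows up near the value $0$. First I would record the standard properties of the Krasnoselskii genus $\gamma$ on the family of closed symmetric subsets of $X\setminus\{0\}$: monotonicity, subadditivity, invariance under odd homeomorphisms, the fact that a compact such set has finite genus and admits a symmetric open neighbourhood of the same genus, and the normalisation $\gamma(A)=k$ when $A$ is oddly homeomorphic to $S^{k-1}$. Since $\varphi$ is even, each set $\varphi^{-\varepsilon}:=\{u\in X:\varphi(u)\le-\varepsilon\}$, $\varepsilon>0$, is closed and symmetric, and it avoids $0$ because $\varphi(0)=0$; hence $\gamma(\varphi^{-\varepsilon})$ is defined.

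Next I would use the geometric hypothesis to force $\gamma(\varphi^{-\varepsilon})\to\infty$ as $\varepsilon\downarrow0$. For each $n$ put $\varepsilon_n:=-\sup\{\varphi(u):u\in V_n\cap\partial B_{\rho_n}\}>0$. The set $V_n\cap\partial B_{\rho_n}$ is the radius-$\rho_n$ sphere of the $n$-dimensional space $V_n$, hence oddly homeomorphic to $S^{n-1}$, so it has genus $n$; since it is contained in $\varphi^{-\varepsilon_n}\subseteq\varphi^{-\varepsilon}$ for every $\varepsilon\le\varepsilon_n$, monotonicity gives $\gamma(\varphi^{-\varepsilon})\ge n$ whenever $\varepsilon\le\varepsilon_n$. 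Now let $K^{-\varepsilon}$ be the set of critical points $u$ of $\varphi$ with $\varphi(u)\le-\varepsilon$. By the PS-condition $K^{-\varepsilon}$ is compact, and it avoids $0$ since $\varphi(0)=0>-\varepsilon$; because $\varphi$ is even one may take an odd pseudo-gradient flow, and because $\varphi$ is bounded below and every level in play is negative (so all the relevant sets stay in $X\setminus\{0\}$), the usual excision argument of Lyusternik--Schnirelmann theory applies: choosing a symmetric open neighbourhood $U$ of $K^{-\varepsilon}$ with $\gamma(\overline U)=\gamma(K^{-\varepsilon})$, the set $\varphi^{-\varepsilon}\setminus U$ contains no critical point, so (on it $\|\varphi'\|$ is bounded below, by PS) the flow pushes it into $U$ in uniform finite time, it cannot descend forever as $\varphi$ is bounded below, and $\varphi^{-\varepsilon}$ equivariantly deformation-retracts onto $\overline U\cap\varphi^{-\varepsilon}$; therefore $\gamma(\varphi^{-\varepsilon})\le\gamma(K^{-\varepsilon})$. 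Combining the two bounds, $\gamma(K^{-\varepsilon})\to\infty$ as $\varepsilon\downarrow0$; in particular $\varphi$ has infinitely many critical points with negative value, each automatically nonzero and satisfying (i) and (ii).

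Finally I would deduce (iii), which I expect to be the delicate part. Suppose, for contradiction, that all critical points $u$ of $\varphi$ with $\varphi(u)<0$ satisfy $||u||_X\ge r_0$ for some $r_0>0$, and let $K$ be the set of all these critical points. Since $\varphi$ is bounded below the values $\{\varphi(u):u\in K\}$ lie in a bounded interval while $\varphi'(u)=0$, so by the PS-condition every sequence in $K$ has a convergent subsequence and $\overline K$ is compact; it is symmetric and contained in $X\setminus B_{r_0}$, hence $\gamma(\overline K)<\infty$. But $K^{-\varepsilon}\subseteq\overline K$ for all $\varepsilon>0$, so $\gamma(K^{-\varepsilon})\le\gamma(\overline K)<\infty$, contradicting $\gamma(K^{-\varepsilon})\to\infty$. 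Hence for every $k\in\NN$ there is a critical point $u_k$ with $0<||u_k||_X<1/k$ and $\varphi(u_k)<0$; passing to a subsequence along which $||u_k||_X$ strictly decreases yields distinct nonzero critical points with $\varphi(u_k)\le0$ and $u_k\to0$ in $X$, which is the assertion. The genuinely technical steps are the construction of the odd deformation keeping all sets away from the origin and the resulting inequality $\gamma(\varphi^{-\varepsilon})\le\gamma(K^{-\varepsilon})$; everything else is genus bookkeeping together with the PS-compactness argument.
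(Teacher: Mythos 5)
The paper does not prove this statement; it is Kajikiya's theorem (reference~\cite{7}), quoted as a known result, so there is no internal proof to compare against. That said, your proposal contains a genuine error. The pivotal claim, on which your whole argument for $(iii)$ rests, is $\gamma(\varphi^{-\varepsilon})\le\gamma(K^{-\varepsilon})$, and this inequality is false in general. Your flow argument correctly shows that every trajectory starting in $\varphi^{-\varepsilon}$ must \emph{enter} a neighbourhood $U$ of $K^{-\varepsilon}$ within a uniformly bounded time $T_0$, but it does not show that $\eta(T_0,u)\in\overline U$: after passing near a saddle-type critical point the trajectory leaves $U$ again, so the time-$T_0$ map need not land in $\overline U$, and no odd continuous map of $\varphi^{-\varepsilon}$ into $\overline U$ is produced this way.

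A concrete counterexample: take $X=\ell^2$, $a_n=1/n$, and $\varphi(u)=-\sum_{n\ge1}a_nu_n^2+\|u\|^4$. This $\varphi$ is even, $C^1$, bounded below, satisfies the PS-condition (the map $u\mapsto(a_nu_n)_n$ is compact), $\varphi(0)=0$, and satisfies the geometric hypothesis with $V_m=\mathrm{span}(e_1,\dots,e_m)$. Its nonzero critical points are exactly the isolated pairs $\pm\sqrt{a_n/2}\,e_n$ with critical values $-a_n^2/4$; hence for every $\varepsilon>0$ the set $K^{-\varepsilon}$ is a \emph{finite} symmetric subset of $X\setminus\{0\}$, so $\gamma(K^{-\varepsilon})=1$, while $\gamma(\varphi^{-\varepsilon})\to\infty$ as $\varepsilon\downarrow0$ by the sphere argument in your Step~1. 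Thus the inequality fails and the claimed blow-up $\gamma(K^{-\varepsilon})\to\infty$ does not hold, even though the theorem's conclusion is true here with $u_n=\sqrt{a_n/2}\,e_n\to0$. Kajikiya's actual argument works with the minimax values $c_k=\inf\{\sup_A\varphi:\gamma(A)\ge k,\ A\ \text{closed symmetric},\ 0\notin A\}$; the standard excision/deformation lemma bounds the genus of the \emph{fixed-level} critical sets $K_{c_k}$ in terms of the genus of sublevel sets, and one then shows $c_k\uparrow0$ and extracts $u_n\to0$ by a separate compactness argument. It does not, and cannot, control the genus of the union $K^{-\varepsilon}$, which is the stronger estimate your route requires.
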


In the sequel, for any $\varphi\in C^1(X,\RR)$, we denote by $K_\varphi$ the critical set of $\varphi$, that is,
$$
K_\varphi=\{u\in X:\varphi'(u)=0\}.
$$

For $X\in\RR$, we set $x^{\pm}=\max\{\pm x,0\}$. Then for any $u\in W^{1,p}(\Omega)$, we define $u^{\pm}(\cdot)=u(\cdot)^{\pm}$. We know that
$$
u^{\pm}\in W^{1,p}(\Omega),\ u=u^+-u^-,\ |u|=u^++u^-.
$$

Let $\vartheta\in C^1(0,\infty)$ be such that $\vartheta(t)>0$ for all $t>0$ and
\begin{equation}\label{eq2}
	\begin{array}{rr}
		0<\hat{c}\leq\frac{\vartheta'(t)t}{\vartheta(t)}\leq c_0\ \mbox{and}\ c_1t^{p-1}\leq\vartheta(t)\leq c_2(t^{\tau-1}+t^{p-1}) \\
		\mbox{for all}\ t>0,\ \mbox{with}\ c_1,c_2>0, 1\leq\tau<p.	
	\end{array}
\end{equation}

Then the hypotheses on the map $a(\cdot)$ are the following:

\smallskip
$H(a):$ $a(y)=a_0(|y|)y$ for all $y\in\RR^N$ with $a_0(t)>0$ for all $t>0$ and
\begin{itemize}
	\item [(i)] $a_0\in C^1(0,\infty),\ t\mapsto a_0(t)t$ is strictly increasing on $(0,\infty)$, $a_0(t)t\rightarrow0^+$ as $t\rightarrow0^+$ and
		$$
		\lim_{t\rightarrow0^+}\frac{a_0'(t)t}{a_0(t)}>-1;
		$$
	\item [(ii)] $|\nabla a(y)|\leq c_3\frac{\vartheta(|y|)}{|y|}$ for all $y\in\RR^N\backslash\{0\}$, and some $c_3>0$;
	\item [(iii)] $(\nabla a(y)\xi,\xi)_{\RR^N}\geq\frac{\vartheta(|y|)}{|y|}|\xi|^2$ for all $y\in\RR^N\backslash\{0\}$, $\xi\in\RR^N$; and
	\item [(iv)] If $G_0(t)=\int_0^t a_0(s)sds$ for all $t>0$, then there exists $q\in(1,p]$ such that
		$$
			t\mapsto G_0(t^{1/q})\ \mbox{is convex and }
			\limsup_{t\rightarrow0^+}\frac{qG_0(t)}{t^q}<+\infty.
		$$
\end{itemize}
\begin{remark}
	Hypotheses $H(a)(i),(ii),(iii)$ are dictated by the nonlinear global regularity theory of Lieberman \cite{8} and the nonlinear maximum principle of Pucci \& Serrin \cite{23}. Hypothesis $H(a)(iv)$ reflects the particular requirements of our problem. However, $H(a)(iv)$ is not restrictive as the examples below illustrate.
\end{remark}

Hypotheses $H(a)$ imply that $G_0(\cdot)$ is strictly convex and strictly increasing. We set $G(y)=G_0(|y|)$ for all $y\in\RR^N$. Evidently, $G(\cdot)$ is convex and $G(0)=0$. Also, we have
$$
\nabla G(y)=G'_0(|y|)\frac{y}{|y|}=a_0(|y|)y=a(y)\ \mbox{for all}\ y\in\RR^N\backslash\{0\},\ \nabla G(0)=0.
$$

So, $G(\cdot)$ is the primitive of $a(\cdot)$. Moreover, the convexity of $G(\cdot)$ implies that
\begin{equation}\label{eq3}
	G(y)\leq(a(y),y)_{\RR^N}\ \mbox{for all}\ y\in\RR^N.
\end{equation}

The next lemma summarizes the main properties of the map $a(\cdot)$ and it is an easy consequence of hypotheses $H(a)$ and condition (\ref{eq2}) above.
\begin{lemma}\label{lem2}
	If hypotheses $H(a)(i),(ii),(iii)$ hold, then
	\begin{itemize}
		\item [(a)] $a(\cdot)$ is continuous, strictly monotone, hence maximal monotone, too;
		\item [(b)] $|a(y)|\leq c_4(1+|y|^{p-1})$ for all $y\in\RR^N$, and some $c_4>0$; and
		\item [(c)] $(a(y),y)_{\RR_N}\geq\frac{c_1}{p-1}|y|^p$ for all $y\in\RR^N$.
	\end{itemize}
\end{lemma}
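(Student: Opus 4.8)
The plan is to derive each of the three claims directly from hypotheses $H(a)$ and the structural condition \eqref{eq2}, treating them essentially as calculus estimates on the scalar function $a_0(\cdot)$ and its companion $\vartheta(\cdot)$. For part (a), continuity of $a(\cdot)$ is immediate from $a(y)=a_0(|y|)y$ with $a_0\in C^1(0,\infty)$ together with the limit $a_0(t)t\to0^+$ as $t\to0^+$ from $H(a)(i)$, which makes $a(\cdot)$ continuous at the origin as well. Strict monotonicity I would obtain from $H(a)(iii)$: for $y\neq y'$ the mean value theorem along the segment $[y',y]$ gives $(a(y)-a(y'),y-y')_{\RR^N}=\int_0^1(\nabla a(y'+t(y-y'))(y-y'),y-y')_{\RR^N}\,dt$, and the integrand is strictly positive by $H(a)(iii)$ since $\vartheta>0$ on $(0,\infty)$ (handling the measure-zero set where the argument vanishes separately). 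Strict monotonicity plus continuity yields maximal monotonicity by the standard fact that a continuous monotone operator on $\RR^N$ is maximal monotone.

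For part (b), I would integrate the gradient bound $H(a)(ii)$: writing $a(y)=a(y)-a(0)=\int_0^1\nabla a(ty)y\,dt$ (with $a(0)=0$), the estimate $|\nabla a(ty)|\le c_3\vartheta(t|y|)/(t|y|)$ and the upper bound $\vartheta(t)\le c_2(t^{\tau-1}+t^{p-1})$ from \eqref{eq2} give $|a(y)|\le c_3c_2\int_0^1(t^{\tau-2}|y|^{\tau-1}+t^{p-2}|y|^{p-1})\,dt$; the integrals converge because $\tau,p>1$, producing $|a(y)|\le c_5(|y|^{\tau-1}+|y|^{p-1})$, and since $\tau<p$ this is dominated by $c_4(1+|y|^{p-1})$ after splitting on $|y|\le1$ and $|y|>1$. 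For part (c), the same representation $(a(y),y)_{\RR^N}=\int_0^1(\nabla a(ty)y,y)_{\RR^N}\,dt$ combined with the lower bound $H(a)(iii)$ gives $(a(y),y)_{\RR^N}\ge\int_0^1\frac{\vartheta(t|y|)}{t|y|}|y|^2\,dt$, and then the lower bound $\vartheta(t)\ge c_1t^{p-1}$ from \eqref{eq2} yields $(a(y),y)_{\RR^N}\ge c_1|y|^p\int_0^1t^{p-2}\,dt=\frac{c_1}{p-1}|y|^p$.

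The only genuinely delicate point is the care needed at $y=0$: the bounds in $H(a)(ii),(iii)$ are stated on $\RR^N\setminus\{0\}$, so in each integral representation one must check integrability of the integrand near $t=0$, which is exactly what the exponents $\tau-2>-1$ and $p-2>-1$ guarantee, and one must invoke $a(0)=0$ and the continuity established via $H(a)(i)$ to justify the fundamental theorem of calculus down to the origin. Since the paper explicitly calls this ``an easy consequence'' of $H(a)$ and \eqref{eq2}, I would present it compactly, spelling out only the integral representations and the convergence of the resulting $t$-integrals, and leaving the maximal monotonicity step as a citation to standard monotone operator theory.
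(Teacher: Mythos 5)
The paper provides no proof, dismissing the lemma as ``an easy consequence'' of $H(a)$ and (2), and your integral-representation strategy is indeed the natural reading of that remark. But there is one genuine gap: your justification that ``the integrals converge because $\tau,p>1$'' fails at an endpoint that (2) permits. Condition (2) only requires $1\le\tau<p$, so $\tau=1$ is allowed, and then $\int_0^1 t^{\tau-2}\,dt=\int_0^1 t^{-1}\,dt$ diverges, so your bound for $|a(y)|$ in part (b) (and the integrability needed to apply the fundamental theorem of calculus in your proof of (a)) breaks down. The first inequality in (2), which you never use, is exactly what saves this: from $0<\hat c\le\vartheta'(s)s/\vartheta(s)$ one gets $\vartheta(s)/s\le\vartheta'(s)/\hat c$, hence after substituting $s=t|y|$,
$$\int_0^1\frac{\vartheta(t|y|)}{t}\,dt=\int_0^{|y|}\frac{\vartheta(s)}{s}\,ds\le\frac{1}{\hat c}\int_0^{|y|}\vartheta'(s)\,ds=\frac{\vartheta(|y|)}{\hat c},$$
where $\vartheta(0^+)=0$ because integrating $\hat c/s\le\vartheta'(s)/\vartheta(s)$ over $[t,1]$ yields $\vartheta(t)\le\vartheta(1)t^{\hat c}\to0$. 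Feeding this into your representation gives $|a(y)|\le (c_3/\hat c)\vartheta(|y|)\le(c_2c_3/\hat c)\bigl(|y|^{\tau-1}+|y|^{p-1}\bigr)$, and splitting on $|y|\le1$ and $|y|>1$ then yields (b). Part (c) is unaffected, since the integrand there is nonnegative and only the lower bound matters, and $\int_0^1 t^{p-2}\,dt$ does converge because $p>1$ strictly.

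A side remark: for strict monotonicity in (a) you can bypass the integral representation (and hence the integrability discussion) entirely. Expanding and applying Cauchy--Schwarz together with $a_0>0$ gives
$$(a(y)-a(y'),y-y')_{\RR^N}\ge\bigl(a_0(|y|)|y|-a_0(|y'|)|y'|\bigr)\bigl(|y|-|y'|\bigr)\ge0,$$
using that $t\mapsto a_0(t)t$ is strictly increasing by $H(a)(i)$; tracking the equality cases (either $|y|\neq|y'|$, or $|y|=|y'|$ and Cauchy--Schwarz is strict) gives strictness, with no appeal to $H(a)(iii)$ at all.
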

This lemma and (\ref{eq3}) lead to the following growth conditions on $G(\cdot)$.
\begin{corollary}\label{cor3}
	If hypotheses $H(a)(i),(ii),(iii)$ hold, then $\frac{c_1}{p(p-1)}|y|^p\leq G(y)\leq c_5(1+|y|^p)$ for all $y\in\RR^N$, and some $c_5>0$.
\end{corollary}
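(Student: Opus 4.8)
The plan is to read both inequalities off Lemma~\ref{lem2}, inequality (\ref{eq3}), and the defining formula $G(y)=G_0(|y|)=\int_0^{|y|}a_0(s)s\,ds$; no new ingredient is needed.

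For the lower bound I would fix $y\in\RR^N\backslash\{0\}$ (the case $y=0$ being trivial, since $G(0)=0$) and apply Lemma~\ref{lem2}(c) to the vector $v=s\,y/|y|$, which satisfies $|v|=s$ for every $s\in(0,|y|]$. This yields $(a(v),v)_{\RR^N}=a_0(|v|)|v|^2=a_0(s)s^2\geq\frac{c_1}{p-1}s^p$, hence $a_0(s)s\geq\frac{c_1}{p-1}s^{p-1}$ for all $s>0$. Integrating this pointwise inequality over $[0,|y|]$ gives
$$G(y)=\int_0^{|y|}a_0(s)s\,ds\geq\frac{c_1}{p-1}\int_0^{|y|}s^{p-1}\,ds=\frac{c_1}{p(p-1)}|y|^p.$$

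For the upper bound I would combine the convexity estimate (\ref{eq3}), i.e. $G(y)\leq(a(y),y)_{\RR^N}$, with the Cauchy--Schwarz inequality and the growth bound of Lemma~\ref{lem2}(b): $G(y)\leq(a(y),y)_{\RR^N}\leq|a(y)|\,|y|\leq c_4(1+|y|^{p-1})|y|=c_4(|y|+|y|^p)$. It then remains to absorb the linear term; since $|y|\leq1$ when $|y|\leq1$, and $|y|\leq|y|^p$ when $|y|\geq1$ (recall $p>1$), one has $|y|+|y|^p\leq2(1+|y|^p)$ for all $y\in\RR^N$, so the claim follows with $c_5=2c_4$.

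There is essentially no obstacle here: this is a bookkeeping corollary that packages the already-established properties of $a(\cdot)$. The only two points deserving a word of care are that Lemma~\ref{lem2}(c) must be used as a genuinely pointwise statement about $a_0$ (so that it may be integrated against $s^{p-1}$), and the elementary splitting $|y|\leq1$ versus $|y|\geq1$ — equivalently a single application of Young's inequality — used to pass from $c_4(|y|+|y|^p)$ to $c_5(1+|y|^p)$.
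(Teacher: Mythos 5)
Your proof is correct and follows the route the paper itself signals by writing that the corollary follows from ``This lemma and (\ref{eq3})'': the lower bound comes from integrating the pointwise estimate $a_0(s)s\geq\frac{c_1}{p-1}s^{p-1}$ implied by Lemma~\ref{lem2}(c), and the upper bound from chaining (\ref{eq3}) with Lemma~\ref{lem2}(b) and the elementary observation $|y|+|y|^p\leq 2(1+|y|^p)$. (Only a cosmetic remark: since $a(y)=a_0(|y|)y$, one has $(a(y),y)_{\RR^N}=|a(y)|\,|y|$ exactly, so the Cauchy--Schwarz step is an equality here, not merely an inequality.)
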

\begin{ex}
	The following maps $a(y)$ satisfy hypotheses $H(a)$:
	\begin{itemize}
		\item [(a)] $a(y)=|y|^{p-2}y, 1<p<\infty$.\\
			This map corresponds to the p-Laplace differential operator defined by
			$$
			\Delta_p u={\rm div}\,(|Du|^{p-2}Du)\ \mbox{for all}\ u\in W^{1,p}(\Omega).
			$$
		\item [(b)] $a(y)=|y|^{p-2}y + |y|^{q-2}y, 1<q<p<\infty$.
			This map corresponds to the $(p,q)$-Laplace differential operator defined by
			$$
			\Delta_p u+\Delta_q u\ \mbox{for all}\ u\in W^{1,p}(\Omega).
			$$
			
			Such operators arise in problems of mathematical physics. Recently $(p,q)$-equations have been studied by Bobkov \& Tanaka \cite{1}, Li \& Zhang \cite{10}, Marano \& Mosconi \cite{11}, Marano, Mosconi \& Papageorgiou \cite{12,13}, Mugnai \& Papageorgiou \cite{15}, Papageorgiou \& R\u{a}dulescu \cite{16}, Sun, Zhang \& Su \cite{24},
			and  Tanaka \cite{25}.
		\item [(c)] $a(y)=(1+|y|^2)^{\frac{p-2}{2}}y, 1<p<\infty$.
			This map corresponds to the generalized p-mean curvature differential operator defined by
			$$
			{\rm div}\,((1+|Du|^2)^{\frac{p-2}{2}}Du)\ \mbox{for all}\ u\in W^{1,p}(\Omega).
			$$
		\item [(d)] $a(y)=|y|^{p-2}y(1+\frac{1}{1+|y|^p}), 1<p<\infty$.
	\end{itemize}
\end{ex}

We denote by $\langle \cdot,\cdot\rangle$ the duality brackets for the pair
$$
(W^{1,p}(\Omega)^*,\ W^{1,p}(\Omega)).
$$

Let $A:W^{1,p}(\Omega)\rightarrow W^{1,p}(\Omega)^*$ be the nonlinear map defined by
$$
\langle A(u),h\rangle = \int_\Omega(a(Du),Dh)_{\RR^N}dz\ \mbox{for all}\ u,h\in W^{1,p}(\Omega).
$$

From Gasinski \& Papageorgiou \cite{3}, we have:
\begin{prop}\label{prop4}
	The map $A:W^{1,p}(\Omega)\rightarrow W^{1,p}(\Omega)^*$ is bounded (maps bounded sets to bounded sets), continuous, monotone (hence maximal monotone, too), and of type $(S)_+$, that is,
	$$
	``u_n\xrightarrow{w}u\ \mbox{in}\ W^{1,p}(\Omega)\ \mbox{and}\ \limsup_{n\rightarrow\infty}\langle A(u_n),u_n-u\rangle\Rightarrow u_n\rightarrow u".
	$$
\end{prop}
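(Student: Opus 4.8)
The plan is to verify the four asserted properties of $A$ in turn, relying on Lemma~\ref{lem2}, the ellipticity condition $H(a)(iii)$, and the lower bound $\vartheta(t)\ge c_1t^{p-1}$ from \eqref{eq2}. Boundedness is immediate from the growth estimate $|a(y)|\le c_4(1+|y|^{p-1})$ in Lemma~\ref{lem2}(b): for $u,h\in W^{1,p}(\Omega)$, Hölder's inequality gives
$$|\langle A(u),h\rangle|\le\int_\Omega|a(Du)|\,|Dh|\,dz\le c_4\big(|\Omega|^{1/p}+\|Du\|_p^{p-1}\big)\|Dh\|_p,$$
so $\|A(u)\|_*\le c_6(1+\|u\|^{p-1})$ and $A$ maps bounded sets to bounded sets. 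For continuity, if $u_n\to u$ in $W^{1,p}(\Omega)$ then $Du_n\to Du$ in $L^p(\Omega;\RR^N)$, and along a subsequence $Du_n(z)\to Du(z)$ a.e.\ with $|Du_n|\le k\in L^p(\Omega)$; continuity of $a(\cdot)$ and dominated convergence (with dominating function $c_4(1+k^{p-1})\in L^{p'}(\Omega)$) yield $a(Du_n)\to a(Du)$ in $L^{p'}(\Omega;\RR^N)$, hence $\langle A(u_n),h\rangle\to\langle A(u),h\rangle$ uniformly over $\|h\|\le1$. Since the limit is independent of the subsequence, $A(u_n)\to A(u)$ in $W^{1,p}(\Omega)^*$.

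For monotonicity, strict monotonicity of $y\mapsto a(y)$ (Lemma~\ref{lem2}(a)) gives $(a(Du)-a(Dv),Du-Dv)_{\RR^N}\ge0$ pointwise, so $\langle A(u)-A(v),u-v\rangle\ge0$; being everywhere defined, monotone and continuous, $A$ is maximal monotone by the standard criterion.

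The real work is the $(S)_+$ property. Let $u_n\xrightarrow{w}u$ in $W^{1,p}(\Omega)$ with $\limsup_{n\to\infty}\langle A(u_n),u_n-u\rangle\le0$. Since $u_n\xrightarrow{w}u$ we have $\langle A(u),u_n-u\rangle\to0$, hence $\limsup_n\langle A(u_n)-A(u),u_n-u\rangle\le0$, and monotonicity forces
$$\int_\Omega\big(a(Du_n)-a(Du),Du_n-Du\big)_{\RR^N}\,dz\to0.$$
Writing $a(\eta)-a(\eta')=\int_0^1\nabla a(\eta'+t(\eta-\eta'))(\eta-\eta')\,dt$ and using $H(a)(iii)$ with $\vartheta(t)\ge c_1t^{p-1}$, one derives the strong monotonicity estimate $(a(\eta)-a(\eta'),\eta-\eta')_{\RR^N}\ge c_7|\eta-\eta'|^p$ when $p\ge2$, and $(a(\eta)-a(\eta'),\eta-\eta')_{\RR^N}\ge c_7|\eta-\eta'|^2(1+|\eta|+|\eta'|)^{p-2}$ when $1<p<2$. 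For $p\ge2$ the displayed limit gives $\|Du_n-Du\|_p\to0$ directly. For $1<p<2$, write
$$|Du_n-Du|^p=\Big(|Du_n-Du|^2(1+|Du_n|+|Du|)^{p-2}\Big)^{p/2}\big(1+|Du_n|+|Du|\big)^{p(2-p)/2}$$
and apply Hölder with exponents $2/p$ and $2/(2-p)$: the first factor tends to $0$ by the displayed limit, while the second stays bounded because $\{\|Du_n\|_p\}$ is bounded, so again $\|Du_n-Du\|_p\to0$. Finally, $u_n\xrightarrow{w}u$ together with the compactness of $W^{1,p}(\Omega)\hookrightarrow L^p(\Omega)$ gives $u_n\to u$ in $L^p(\Omega)$, whence $u_n\to u$ in $W^{1,p}(\Omega)$. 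I expect the only genuinely delicate point to be the case $1<p<2$: there the pointwise strong monotonicity degenerates, and one must interpolate the gradient difference against the uniform $L^p$-bound on $\{Du_n\}$ as above; the non-homogeneity of $a(\cdot)$ causes no extra trouble since all the estimates needed are read off directly from $H(a)(iii)$ and \eqref{eq2}.
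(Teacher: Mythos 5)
The paper does not prove Proposition~\ref{prop4}: it is quoted verbatim from Gasinski \& Papageorgiou \cite{3}, so there is no in-text proof to compare against. Your self-contained verification is the standard one and it is essentially correct. Boundedness from Lemma~\ref{lem2}(b) and H\"older, continuity via the subsequence principle and dominated convergence (with dominating $L^{p'}$-function supplied again by Lemma~\ref{lem2}(b)), monotonicity from the pointwise monotonicity of $a(\cdot)$, and maximal monotonicity from the Minty--Browder criterion (everywhere-defined, monotone, hemicontinuous) are all fine. For the $(S)_+$ part, the reduction to $\int_\Omega\big(a(Du_n)-a(Du),Du_n-Du\big)\,dz\to0$ is correct, and the line-integral representation of $a(\eta)-a(\eta')$ together with $H(a)(iii)$ and $\vartheta(t)\ge c_1t^{p-1}$ does yield the two-regime strong-monotonicity estimate you state: for $p\ge2$ one uses $|\eta'+t(\eta-\eta')|\ge|t-t_0|\,|\eta-\eta'|$ (with $t_0$ the foot of the perpendicular from the origin to the line) and $\inf_{t_0}\int_0^1|t-t_0|^{p-2}dt=\frac{2^{2-p}}{p-1}>0$; for $1<p<2$ one bounds $|\eta'+t(\eta-\eta')|\le 2(1+|\eta|+|\eta'|)$ and uses $p-2<0$. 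The H\"older interpolation you then perform in the singular case $1<p<2$ is exactly the standard device, and with the compact embedding $W^{1,p}(\Omega)\hookrightarrow L^p(\Omega)$ it closes the argument. The only blemish is cosmetic: in the boundedness estimate the $|\Omega|$-factor should carry the exponent $1/p'$ rather than $1/p$ (you are taking the $L^{p'}$-norm of $1+|Du|^{p-1}$); this has no bearing on the conclusion.
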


The hypotheses on the potential function $\xi(\cdot)$ and on the boundary coefficient $\beta(\cdot)$ are the following:

\smallskip
$H(\xi): \xi\in L^\infty(\Omega)$.

\smallskip
$H(\beta): \beta\in C^{0,\alpha}(\partial\Omega)\ \mbox{for some}\ \alpha\in(0,1)\ \mbox{and}\ \beta(z)\geq0\ \mbox{for all}\ z\in\partial\Omega.$

\begin{remark}
	If $\beta\equiv0$, then we recover the Neumann problem.
\end{remark}

Finally, we introduce our conditions on the reaction term $f(z,x)$:

\smallskip
$H(f)$: $f:\Omega\times\RR\rightarrow\RR$ is a Carath\'eodory function such that $f(z,0)=0$ for almost all $z\in\Omega$ and
\begin{itemize}
	\item [(i)] there exists $\eta>0$ such that for almost all $z\in\Omega, f(z,\cdot)|_{[-\eta,\eta]}$ is odd;
	\item [(ii)] $|f(z,x)|\leq a_\eta(z)$ for almost all $z\in\Omega$,  $x\in[-\eta,\eta]$, with $a_\eta\in L^\infty(\Omega)$;
	\item [(iii)] with $q\in(1,p]$ as in hypothesis $H(a)(iv)$, we have
		$$
		\lim_{x\rightarrow0}\frac{f(z,x)}{|x|^{q-2}x} = +\infty\ \mbox{uniformly for almost all}\ z\in\Omega;\ \mbox{and}
		$$
	\item [(iv)] there exists $\hat{\xi}>0$ such that for almost all $z\in\Omega$
		$$
		x\rightarrow f(z,x) + \hat{\xi}|x|^{p-2}x
		$$
		is nondecreasing on $[-\eta,\eta]$
\end{itemize}
\begin{remark}
	We point out that all the above hypotheses concern the behaviour of $f(z,\cdot)$ only near zero.
\end{remark}

Finally, we mention that nonlinear problems with an indefinite potential have recently been studied in the context of equations driven by the Neumann $p$-Laplacian by Gasinski \& Papageorgiou \cite{4} (resonant problems) and Fragnelli, Mugnai \& Papageorgiou \cite{2} (superlinear problems). Also, nodal solutions for nonlinear Robin problems with no potential term, were obtained by Papageorgiou \& R\u{a}dulescu \cite{20}.

\section{Nodal solutions}

Let $\varepsilon\in(0,\eta)$ and consider an even function $\gamma\in C^1(\RR)$ such that $0\leq\gamma\leq 1$, $\gamma|_{[-\varepsilon,\varepsilon]}=1$ and ${\rm supp}\,\gamma\subseteq[-\eta,\eta]$.

We set
$$
\hat{f}(z,x)=\gamma(x)f(z,x) + (1-\gamma(x))\xi(z)|x|^{p-2}x.
$$

Evidently, $\hat{f}(z,x)$ is a Carath\'eodory function which is odd in $x\in\RR$ and has the following two additional properties:
\begin{eqnarray}
\hat{f}(z,\cdot)|_{[-\varepsilon,\varepsilon]}=f(z,\cdot)|_{[-\varepsilon,\varepsilon]}\ \mbox{for all}\ z\in\Omega; \label{eq4}\\
\hat{f}(z,x)=\xi(z)|x|^{p-2}x\ \mbox{for all}\ z\in\Omega,\  |x|\geq\eta. \label{eq5}
\end{eqnarray}

It follows from (\ref{eq5})  that
\begin{equation}\label{eq6}
	\hat{f}(z,\eta)-\xi(z)\eta^{p-1}=0\ \mbox{for almost all}\ z\in\Omega.
\end{equation}

Since $\hat{f}(z,\cdot)$ is odd, we have
\begin{equation}\label{eq7}
	\hat{f}(z,-\eta) + \xi(z)\eta^{p-1}=0\ \mbox{for almost all}\ z\in\Omega.
\end{equation}

On account of hypothesis $H(f)(iii)$, given any $\mu>0$, we can find $\delta=\delta(\mu)\in(0,\varepsilon)$ such that
\begin{equation}\label{eq8}
	f(z,x)x=\hat{f}(z,x)\geq\mu|x|^q\ \mbox{for almost all}\ z\in\Omega,\ \mbox{and all}\ |x|\leq\delta\ \mbox{(see (\ref{eq4})).}
\end{equation}

Then (\ref{eq8}) combined with hypothesis $H(f)(ii)$ implies that given $r>p$ we can find $c_6>0$ such that
\begin{equation}\label{eq9}
	\hat{f}(z,x)x\geq\mu|x|^q-c_6|x|^r\ \mbox{for almost all}\ z\in\Omega,\ \mbox{and all}\ x\in\RR.
\end{equation}

We introduce the following function
\begin{equation}\label{eq10}
	k(z,x)=\mu|x|^{q-2}x-c_6|x|^{r-2}x.
\end{equation}

This is a Carath\'eodory function which is odd in $x\in\RR$.

We consider the following auxiliary nonlinear Robin problem:
\begin{equation}\label{eq11}
	\left\{
		\begin{array}{l}
			-{\rm div}\,a(Du(z))+|\xi(z)||u(z)|^{p-2}u(z)=k(z,u(z))\ \mbox{in}\ \Omega, \\
			\frac{\partial u}{\partial n_a} + \beta(z)|u|^{p-2}u=0\ \mbox{on}\ \partial\Omega.
		\end{array}
	\right\}
\end{equation}
\begin{prop}\label{prop5}
	If hypotheses $H(a),\ H(\xi),\ H(\beta)$ hold, then problem (\ref{eq11}) admits a unique positive solution
	$$
	u^*\in D_+
	$$
	and since $k(z,\cdot)$ is odd, $v^*=-u^*\in D_+$ is the unique negative solution of (\ref{eq11}).
\end{prop}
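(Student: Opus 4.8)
The plan is to obtain the positive solution of \eqref{eq11} via the direct method on a suitably truncated energy functional, then upgrade its regularity and strict positivity using nonlinear regularity theory and the nonlinear maximum principle, and finally establish uniqueness by a monotonicity/comparison argument exploiting the structure of $k(z,\cdot)$.

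First I would introduce the positive truncation $k^+(z,x)=k(z,x^+)=\mu(x^+)^{q-1}-c_6(x^+)^{r-1}$ and the associated $C^1$-functional
$$
\psi_+(u)=\int_\Omega G(Du)\,dz+\frac{1}{p}\int_\Omega|\xi(z)|(u^+)^p\,dz+\frac{1}{p}\int_\Omega(u^-)^p\,dz+\frac{1}{p}\int_{\partial\Omega}\beta(z)(u^+)^p\,d\sigma-\int_\Omega K^+(z,u)\,dz,
$$
where $K^+(z,x)=\int_0^x k^+(z,s)\,ds$. Because $r>p$ and $G(\cdot)$ is $p$-coercive (Corollary \ref{cor3}), the leading $\frac{1}{p}\int_\Omega(u^-)^p$ term together with $-c_6\|u^+\|_r^r/r$ dominates and one checks $\psi_+$ is coercive on $W^{1,p}(\Omega)$; it is also sequentially weakly lower semicontinuous (convexity of $G$, compactness of the embedding $W^{1,p}(\Omega)\hookrightarrow L^r(\Omega)$ and of the trace). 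Hence $\psi_+$ has a global minimizer $u^*$. Using hypothesis $H(f)(iii)$-type behaviour encoded in \eqref{eq8}--\eqref{eq10} (namely $\mu$ can be taken large and $q\le p$), testing $\psi_+$ along $t\hat u$ for a fixed $\hat u\in D_+$ and small $t>0$ gives $\psi_+(t\hat u)<0$, so $u^*\neq 0$. The Euler equation $\psi_+'(u^*)=0$, tested with $-(u^*)^-\in W^{1,p}(\Omega)$, yields (via Lemma \ref{lem2}(c)) $(u^*)^-=0$, so $u^*\ge 0$, $u^*\neq 0$, and $u^*$ solves \eqref{eq11}.

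Next I would bootstrap regularity: $u^*\in L^\infty(\Omega)$ by the Moser iteration / Winkert-type result for Robin problems, and then $u^*\in C^{1,\alpha}(\overline\Omega)$ by the nonlinear global regularity theory of Lieberman \cite{8} (this is exactly what $H(a)(i),(ii),(iii)$ are set up for). To place $u^*$ in the interior $D_+$, rewrite the equation as $-\,{\rm div}\,a(Du^*)\ge -\,\big(|\xi|_\infty+c_6\|u^*\|_\infty^{r-q}\big)(u^*)^{p-1}$ pointwise (absorbing $\mu(u^*)^{q-1}\ge 0$), i.e. $-\,{\rm div}\,a(Du^*)+c(u^*)^{p-1}\ge 0$ with $c\ge 0$; the nonlinear strong maximum principle and boundary point lemma of Pucci \& Serrin \cite{23} then force $u^*(z)>0$ for all $z\in\overline\Omega$, so $u^*\in D_+$. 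Oddness of $k(z,\cdot)$ gives immediately that $v^*=-u^*\in -D_+$ solves \eqref{eq11} and is its unique negative solution once uniqueness of the positive one is shown.

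The main obstacle is uniqueness, since $a(\cdot)$ is nonhomogeneous and $q$ may equal $p$, so the usual Picone/Díaz--Saá argument is not directly available. My approach is the standard device: if $u_1,u_2\in D_+$ both solve \eqref{eq11}, consider $t^*=\sup\{t\in(0,1]:tu_1\le u_2\}$ and, assuming $t^*<1$, use that $x\mapsto k(z,x)+\hat\xi x^{p-1}$ (with $\hat\xi$ large; this uses $H(f)(iv)$ on the relevant range, or simply that $\mu x^{q-1}-c_6x^{r-1}+\hat\xi x^{p-1}$ is nondecreasing on the compact range of $u_1,u_2$) is monotone, together with the weak comparison principle for the operator $u\mapsto-{\rm div}\,a(Du)+(|\xi|_\infty+\hat\xi)|u|^{p-2}u$ — itself a consequence of the monotonicity of $A$ (Proposition \ref{prop4}) and of $t\mapsto|t|^{p-2}t$ — to derive $t^*u_1\le u_2$ with a strict gap, contradicting maximality via the maximum principle. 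An alternative, cleaner route I would try first is to observe that any positive solution of \eqref{eq11} is a global minimizer of $\psi_+$ and then use the convexity properties granted by $H(a)(iv)$ (the map $t\mapsto G_0(t^{1/q})$ convex), which make the functional $u\mapsto\psi_+(u)$ strictly convex along the ``$q$-rays'' $t\mapsto((1-t)u_1^q+tu_2^q)^{1/q}$, forcing $u_1=u_2$. Either way, uniqueness is where the specific hypothesis $H(a)(iv)$ does the real work.
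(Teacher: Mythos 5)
Your existence and regularity argument matches the paper's: minimize a truncated/perturbed energy, show the minimizer is nonzero via $H(a)(iv)$ and the behaviour of $k$ near zero, test the Euler equation with $-(u^*)^-$ to get $u^*\ge 0$, then bootstrap to $u^*\in D_+$ via Lieberman and Pucci--Serrin. The paper additionally truncates $\hat k$ at $\pm\eta$ and proves $u^*\le\eta$ before removing the truncation; you avoid this because the untruncated $k$ already renders $\psi_+$ coercive (the $+\frac{c_6}{r}\|u^+\|_r^r$ contribution with $r>p$), which is a legitimate simplification.

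For uniqueness, your \emph{second} route is the correct one and is essentially the paper's: the key is the discrete hidden (Diaz--Saa type) convexity coming from $H(a)(iv)$. The paper, following Papageorgiou \& Winkert, keeps only the differential, potential and boundary terms in the convex functional $\hat i(u)=\int_\Omega G(Du^{1/q})+\frac1p\int_\Omega|\xi|u^{p/q}+\frac1p\int_{\partial\Omega}\beta u^{p/q}$, uses monotonicity of $\hat i'$, and then exploits that $x\mapsto k(z,x)/x^{q-1}=\mu-c_6x^{r-q}$ is strictly decreasing. Your variant, putting all of $\psi_+$ on the $q$-rays $t\mapsto((1-t)u_1^q+tu_2^q)^{1/q}$, also works since $-K^+$ contributes $-\frac\mu q\int u^q$ (affine along the ray) and $+\frac{c_6}{r}\int u^r$ (strictly convex, $r/q>1$); however, the intermediate claim that ``any positive solution is a global minimizer of $\psi_+$'' is not a priori obvious --- it is itself a consequence of the convexity you invoke --- so it is cleaner to argue directly that $g(t)=\psi_+\big(((1-t)u_1^q+tu_2^q)^{1/q}\big)$ is (strictly) convex with $g'(0)=g'(1)=0$, forcing $u_1=u_2$. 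Your \emph{first} route (sliding via $t^*=\sup\{t:tu_1\le u_2\}$) should be dropped: it implicitly uses $(p-1)$-homogeneity to relate $-\,\mathrm{div}\,a(D(t^*u_1))$ to $(t^*)^{p-1}(-\,\mathrm{div}\,a(Du_1))$, which fails for the nonhomogeneous operators allowed by $H(a)$, and invoking $H(f)(iv)$ there is out of place since Proposition \ref{prop5} is stated under $H(a),H(\xi),H(\beta)$ only (the explicit polynomial form of $k$ makes it unnecessary in any case).
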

\begin{proof}
	We consider the Carath\'eodory function $\hat{k}(z,x)$ defined by
	\begin{equation}\label{eq12}
		\hat{k}(z,x)=\left\{
	     \begin{array}{ll}
	       k(z,-\eta)-\eta^{p-1} & \mbox{if}\ x<-\eta\\
	       k(z,x) + |x|^{p-2}x & \mbox{if}\ -\eta\leq x\leq\eta \\
	       k(z,\eta) + \eta^{p-1}\ & \mbox{if}\ \eta<x.
	     \end{array}
		\right.
	\end{equation}
	
	We set $\hat{K}(z,x)=\int_0^x\hat{k}(z,s)ds$ and consider the $C^1$-functional $\hat{\varphi_+}: W^{1,p}(\Omega)\rightarrow\RR$ defined by
	$$
	\begin{array}{rr}
		\hat{\varphi_+}(u)=\int_\Omega G(Du)dz + \frac{1}{p}\int_\Omega [|\xi(z)|+1]|u|^pdz + \frac{1}{p}\int_{\partial\Omega}\beta(z)|u|^pd\sigma \\ - \int_\Omega\hat{K}(z,u^+)dz\quad
		\mbox{for all}\ u\in W^{1,p}(\Omega).
	\end{array}
	$$
	
	From (\ref{eq12}) and Corollary \ref{cor3} it is clear that
	$$
	\hat{\varphi}_+(\cdot)\ \mbox{is  coercive.}
	$$
	
	Also, from the Sobolev embedding theorem and the compactness of the trace map, we deduce that
	$$
	\hat{\varphi}_+(\cdot)\ \mbox{is sequentially weakly lower semicontinuous.}
	$$
	
	So, by the Weierstrass-Tonelli theorem, we can find $u^*\in W^{1,p}(\Omega)$ such that
	\begin{equation}\label{eq13}
		\hat{\varphi}_+(u^*)=\inf\left\{\hat{\varphi}_+(u): u\in W^{1,p}(\Omega)\right\}.
	\end{equation}
	
	On account of hypothesis $H(a)(iv)$, we can find $c_7>0$ such that
	\begin{equation}\label{eq14}
		G(y)\leq\frac{c_7}{q}|y|^q\ \mbox{for all}\ |y|\leq\delta,
	\end{equation}
	with $\delta>0$ as in (\ref{eq8}). Let $u\in D_+$. Then we can find $t\in(0,1)$ small such that
	\begin{equation}\label{eq15}
		tu(z)\in(0,\delta]\ \mbox{and}\ |D(tu)(z)|\leq\delta\ \mbox{for all}\ z\in\overline{\Omega}.
	\end{equation}
	
	Using (\ref{eq10}), (\ref{eq12}), (\ref{eq14}) and (\ref{eq15}), we obtain
	$$
	\begin{array}{ll}
		\hat{\varphi}_+(tu) & \leq \frac{t^qc_7}{q}||Du||^q_q + \frac{t^q}{q}\int_\Omega|\xi(z)||u|^qdz + \frac{t^q}{q}\int_{\partial\Omega}\beta(z)|u|^qd\sigma\\ + \frac{t^r}{r}||u||^r_r - \frac{t^q}{q}\mu||u||^q_q\\
		& \mbox{(since $t\in(0,1),\ q\leq p<r$)}\\
		& \leq \left[ c_8-\mu c_9\right]t^q\ \mbox{for some}\ c_8, c_9>0\ \mbox{depending on $u$}.
	\end{array}
	$$
	
	Choosing $\mu>\frac{c_8}{c_9}$, we infer that
	$$
	\begin{array}{ll}
		& \hat{\varphi}_+(tu)<0,\\
		\Rightarrow & \hat{\varphi}_+(u^*)<0=\hat{\varphi}_+(0)\ \mbox{(see (\ref{eq13}))},\\
		\Rightarrow & u^*\neq0.
	\end{array}
	$$
	
	From (\ref{eq13}) we have
	\begin{eqnarray}\label{eq16}
			&& \hat{\varphi}_+'(u^*)=0,\nonumber\\
			&\Rightarrow &\langle A(u^*),h\rangle + \int_\Omega[|\xi(z)|+1]|u^*|^{p-2}u^*hdz + \int_{\partial\Omega}\beta(z)|u^*|^{p-2}u^*hdz = \nonumber\\
			&&\hspace{1cm}\int_\Omega\hat{k}(z,(u^*)^+)hdz\\
			&& \mbox{for all}\ h\in W^{1,p}(\Omega).\nonumber
	\end{eqnarray}
	
	In (\ref{eq16}) we choose $h=-(u^*)^-\in W^{1,p}(\Omega)$. Using Lemma \ref{lem2}(c), we obtain
	$$
	\begin{array}{ll}
		& \frac{c_1}{p-1}||D(u^*)^-||^p_p + ||(u^*)^-||^p_p\leq0\ \mbox{(see hypothesis $H(B)$)},\\
		\Rightarrow & u^*\geq0,\ u^*\neq0.	
	\end{array}
	$$
	
	In (\ref{eq16}) we choose $h=(u^*-\eta)^+\in W^{1,p}(\Omega)$. Then
	$$
	\begin{array}{ll}
		& \langle A(u^*),(u^*-\eta)^+\rangle + \int_\Omega[|\xi(z)|+1](u^*)^{p-1}(u^*-\eta)^+dz \\ + & \int_{\partial\Omega}\beta(z)(u^*)^{p-1}(u^*-\eta)^+d\sigma \\
		= & \int_\Omega\left[ \mu\eta^{q-1} - c_6\eta^{r-1} + \eta^{p-1}\right](u^*-\eta)^+dz\ \mbox{(see (\ref{eq12}) and (\ref{eq10}))} \\
		\leq & \int_\Omega\left[\hat{f}(z,\eta)+\eta^{p-1}\right](u^*-\eta)^+dz\ \mbox{(see (\ref{eq9}))}\\
		= & \int_\Omega\left[ \xi(z)+1\right]\eta^{p-1}(u^*-\eta)^+dz\ \mbox{(see (\ref{eq6}))}\\
		\leq & \langle A(\eta),(u^*-\eta)^+\rangle + \int_\Omega\left[|\xi(z)|+1\right]\eta^{p-1}(u^*-\eta)^+dz + \int_{\partial\Omega}\beta(z)\eta^{p-1}(u^*-\eta)^+d\sigma\\
		& \mbox{(note that $A(\eta)=0$ and see hypothesis $H(\beta)$),}\\
		\Rightarrow & \langle A(u^*)-A(\eta),(u^*-\eta)^+\rangle + \int_{\Omega}\left[|\xi(z)|+1\right]((u^*)^{p-1}-\eta^{p-1})(u^*-\eta)^+dz\leq0\\
		& \mbox{(see hypotesis $H(\beta)$)},\\
		\Rightarrow & u^*\leq\eta.
	\end{array}
	$$
	
	So, we have proved that
	\begin{equation}\label{eq17}
		u^*\in[0,\eta]=\left\{u\in W^{1,p}(\Omega):0\leq u(z)\leq\eta\ \mbox{for almost all}\ z\in\Omega\right\}.
	\end{equation}
	
	From (\ref{eq10}), (\ref{eq12}), (\ref{eq16}) and (\ref{eq17}), we infer that $u^*$ is a positive solution of problem (\ref{eq11}). From Papageorgiou \& R\u{a}dulescu \cite{19}, we have
	$$
	u^*\in L^\infty(\Omega).
	$$
	
	Now the nonlinear regularity theory of Lieberman \cite{8} implies that
	$$
	u^*\in C_+\backslash\{0\}.
	$$
	
	From (\ref{eq16}) and (\ref{eq17}), we have
	$$
	\begin{array}{ll}
		& \left\{
			\begin{array}{ll}
					-{\rm div}\,a(Du^*(z)) + |\xi(z)|u^*(z)^{p-1} = k(z,u^*(z))\ \mbox{for almost all}\ z\in\Omega, \\
					\frac{\partial u^*}{\partial n_a} + \beta(z)u^*=0\ \mbox{on}\ \partial\Omega
			\end{array}
		\right\} \\
		& \mbox{(see Papageorgiou \& R\u{a}dulescu \cite{17})} \\
		\Rightarrow & -{\rm div}\,a(Du^*(z)) + |\xi(z)|u^*(z)^{p-1}\geq-c_6u^*(z)^{r-1}\ \mbox{for almost all}\ z\in\Omega\ \mbox{(see (\ref{eq10}))}, \\
		\Rightarrow & {\rm div}\,a(Du^*(z))\leq\left[c_6||u^*||^{r-p}_\infty + ||\xi||_\infty\right]u^*(z)^{p-1}\ \mbox{for almost all}\ z\in\Omega\ \\
		& \mbox{(see hypothesis $H(\xi)$)}, \\
		\Rightarrow & u^*\in D_+\ \mbox{(see Pucci \& Serrin \cite[p. 120]{23})}.
	\end{array}
	$$
	
	Next, we show the uniqueness of this solution. To this end, let $\hat{i}:L^1(\Omega)\rightarrow\overline{\RR}=\RR\cup\{+\infty\}$ be the integral functional defined by
	$$
	\hat{i}(u)=\left\{
		\begin{array}{ll}
			\int_\Omega G(Du^{\frac{1}{q}})dz + \frac{1}{p}\int_\Omega|\xi(z)|u^{\frac{p}{q}}dz + \frac{1}{p}\int_{\partial\Omega}\beta(z)u^\frac{p}{q}d\sigma & \mbox{if}\ u\geq0, u^\frac{1}{q}\in W^{1,p}(\Omega) \\
			+\infty & \mbox{otherwise}.
		\end{array}
	\right.
	$$
	
	From Papageorgiou \& Winkert \cite{22} (see the proof of Proposition 3.3), we know that $\hat{i}(\cdot)$ is convex and if $u^*,v^*\in D_+$ are two positive solutions of (\ref{eq11}), then
	$$
	\begin{array}{ll}
		\hat{i}'((u^*)^q)(h)= & \frac{1}{q}\int_\Omega\frac{-{\rm div}\,a(Du^*)+|\xi(z)|(u^*)^{p-1}}{(u^*)^{q-1}}hdz \\
		\hat{i}'((v^*)^q)(h)= & \frac{1}{q}\int_\Omega\frac{-{\rm div}\,a(Dv^*)+|\xi(z)|(v^*)^{p-1}}{(v^*)^{q-1}}hdz\ \mbox{for all}\ h\in C^1(\overline\Omega).
	\end{array}
	$$
	
	The convexity of $\hat{i}(\cdot)$ implies the monotonicity of $\hat{i}'(\cdot)$. Hence
	$$
	\begin{array}{ll}
		0 & \leq\int_\Omega\left[\frac{-{\rm div}\,a(Du^*)+|\xi(z)|(u^*)^{p-1}}{(u^*)^{q-1}} - \frac{{\rm div}\,a(Dv^*)+|\xi(z)|(v^*)^{p-1}}{(v^*)^{q-1}}\right]((u^*)^q-(v^*)^q)dz\\
		& = \int_\Omega c_6\left[(v^*)^{r-q}-(v^*)^{r-q}\right]((u^*)^q-(v^*)^q)dz\ \mbox{(see (\ref{eq10}))}, \\
		\Rightarrow & u^*=v^*\ \mbox{(since $q\leq p<r$)}.
	\end{array}
	$$
	
	This proves the uniqueness of the positive solution $u^*\in D_+$ of (\ref{eq11}). Since problem (\ref{eq11}) is odd, it follows that $v^*=-u^*\in-D_+$ is the unique negative solution of problem (\ref{eq11}).
\end{proof}

Consider the following Robin problem:
\begin{equation}\label{eq18}
	\left\{
		\begin{array}{ll}
			-{\rm div}\,a(Du(z)) + \xi(z)|u(z)|^{p-2}u(z) = \hat{f}(z,u(z))\ \mbox{in}\ \Omega, \\
			\frac{\partial u}{\partial n_a}+\beta(z)|u|^{p-2}u=0\ \mbox{on}\ \partial\Omega.
		\end{array}
	\right\}
\end{equation}

We denote by $S^+$ (respectively $S^-$)  the set of positive (respectively negative) solutions of problem (\ref{eq18}) which are in the order interval $[0,\eta]=\{u\in W^{1,p}(\Omega):0\leq u(z)\leq\eta$ for almost all $z\in\Omega\}$ (respectively in $[-\eta,0]=\left\{\right.v\in W^{1,p}(\Omega):-\eta\leq v(z)\leq 0$ for almost all $z\in\Omega\left.\right\}$). From Papageorgiou, R\u{a}dulescu \& Repov\v{s} \cite{21}, we know that
\begin{itemize}
	\item $S^+$ is downward directed (that is, if $u_1,\ u_2\in S^+$, then we can find $u\in S^+$ such that $u\leq u_1,u\leq u_2$).
	\item $S^-$ is upward directed (that is, if $v_1,\ v_2\in S^-$, then we can find $v\in S^-$ such that $v_1\leq v, v_2\leq v$).
\end{itemize}

Moreover, reasoning as in the proof of Proposition \ref{prop5} (with $k(z,x)$ replaced by $\hat{f}(z,x)$), we show that
$$
\emptyset\neq S^+\subseteq D_+\ \mbox{and}\ \emptyset\neq S^-\subseteq-D_+.
$$
\begin{prop}\label{prop6}
	If hypotheses $H(a),H(\xi),H(\beta),H(f)$ hold, then $u^*\leq u$ for all $u\in S^+$ and $v\leq v^*$ for all $v\in S^-$.
\end{prop}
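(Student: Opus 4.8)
The plan is to establish $u^*\le u$ for $u\in S^+$ by a truncation--minimization scheme in which $u$ itself plays the role of an upper barrier, and then to deduce the statement for $S^-$ by the mirror-image argument, using that all the relevant data ($k(z,\cdot)$, $\hat f(z,\cdot)$, the truncations) are odd. So fix $u\in S^+\subseteq D_+$. Rewriting the auxiliary problem (\ref{eq11}) in the shifted form $-{\rm div}\,a(Dw)+[|\xi(z)|+1]|w|^{p-2}w=k(z,w)+|w|^{p-2}w$, I truncate the shifted right-hand side below at $0$ and above at $u(z)$: set $\hat k_+(z,x)=0$ for $x\le 0$, $\hat k_+(z,x)=k(z,x)+x^{p-1}$ for $0\le x\le u(z)$, and $\hat k_+(z,x)=k(z,u(z))+u(z)^{p-1}$ for $x>u(z)$; let $\hat K_+(z,x)=\int_0^x\hat k_+(z,s)\,ds$ and
$$
\hat\psi_+(w)=\int_\Omega G(Dw)\,dz+\frac1p\int_\Omega[|\xi(z)|+1]|w|^p\,dz+\frac1p\int_{\partial\Omega}\beta(z)|w|^p\,d\sigma-\int_\Omega\hat K_+(z,w)\,dz .
$$
Since $\hat k_+$ is bounded, $\hat K_+$ has at most linear growth, so Corollary \ref{cor3} together with the $+1$ shift makes $\hat\psi_+$ coercive, and the compact embedding $W^{1,p}(\Omega)\hookrightarrow L^p(\Omega)$ together with the compactness of the trace makes it sequentially weakly lower semicontinuous. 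Hence $\hat\psi_+\in C^1(W^{1,p}(\Omega),\RR)$ attains its infimum at some $\bar u$.

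The proof then rests on three claims about $\bar u$. \emph{(a) $\bar u\ne 0$:} taking $v\in D_+$ and $t\in(0,1)$ as in the proof of Proposition \ref{prop5} (with $t$ also so small that $tv\le u$), the explicit form of $\hat k_+$ on $[0,u(z)]$ and estimate (\ref{eq14}) give $\hat\psi_+(tv)\le\big[\frac{c_7}{q}\|Dv\|_q^q-\frac{\mu}{q}\|v\|_q^q\big]t^q+O(t^p)+O(t^r)$ with $q\le p<r$, so for $\mu$ large $\hat\psi_+(tv)<0=\hat\psi_+(0)$ for small $t$, whence $\hat\psi_+(\bar u)<0$ and $\bar u\ne 0$. \emph{(b) $0\le\bar u\le u$:} testing $\hat\psi_+'(\bar u)=0$ with $-\bar u^-$ and using Lemma \ref{lem2}(c) and $H(\beta)$ forces $\bar u^-=0$; testing with $(\bar u-u)^+$ and comparing with the weak equation satisfied by $u\in S^+$ yields $\bar u\le u$. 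This comparison is the heart of the matter: since $|\xi(z)|-\xi(z)\ge 0$ and, dividing (\ref{eq9}) by $x>0$, $\hat f(z,x)\ge k(z,x)$ for $x\ge 0$, the identity $-{\rm div}\,a(Du)+\xi(z)u^{p-1}=\hat f(z,u)$ gives $-{\rm div}\,a(Du)+[|\xi(z)|+1]u^{p-1}=\hat f(z,u)+[|\xi(z)|-\xi(z)+1]u^{p-1}\ge k(z,u)+u^{p-1}$, i.e. $u$ is a supersolution of the shifted auxiliary problem; subtracting the two weak formulations tested against $(\bar u-u)^+$ and using the monotonicity of $A$ (Proposition \ref{prop4}), the strict monotonicity of $t\mapsto t^{p-1}$ on $(0,\infty)$, and $\beta\ge 0$ forces $|\{\bar u>u\}|=0$. \emph{(c) Conclusion:} once $0\le\bar u\le u\le\eta$ we have $\hat k_+(z,\bar u)=k(z,\bar u)+\bar u^{p-1}$ a.e., so $\bar u$ solves the weak form of (\ref{eq11}); the nonlinear regularity of Lieberman \cite{8} and the argument already used in Proposition \ref{prop5} give $\bar u\in D_+$, and the uniqueness of the positive solution of (\ref{eq11}) forces $\bar u=u^*$. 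Therefore $u^*=\bar u\le u$.

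For $v\in S^-$ one repeats the scheme with the truncation of the shifted right-hand side of (\ref{eq11}) performed below at $v(z)$ and above at $0$: the resulting minimizer $\bar v$ satisfies $v\le\bar v\le 0$ and $\bar v\ne 0$, hence --- using that $k(z,\cdot)$ is odd, so $v^*=-u^*\in-D_+$ is the unique negative solution of (\ref{eq11}) --- we get $\bar v=v^*$ and $v\le v^*$. The only genuine obstacle is step (b): one must keep careful track of the passage from the ``$\xi$''-equation satisfied by the elements of $S^\pm$ to the ``$|\xi|$''-equation that defines $u^*,v^*$, and verify that the sign of $|\xi|-\xi$ points the right way, so that members of $S^+$ (resp. $S^-$) are super- (resp. sub-)solutions of the auxiliary problem; the remaining steps are routine variants of the truncation argument already carried out for Proposition \ref{prop5}.
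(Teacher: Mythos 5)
Your proposal is correct and follows essentially the same route as the paper's proof: truncate the shifted right-hand side of the auxiliary problem below at $0$ and above at $u(\cdot)$, minimize the associated coercive weakly lower semicontinuous functional, show the minimizer is nontrivial, nonnegative and dominated by $u$ via the two test-function choices $-\bar u^-$ and $(\bar u-u)^+$, conclude that the minimizer solves (\ref{eq11}) and invoke the uniqueness from Proposition \ref{prop5}. Your explicit bookkeeping of the term $[|\xi(z)|-\xi(z)]u^{p-1}\ge 0$ in step (b) is in fact the cleanest way to see the comparison, and it fills in a small sign inequality that the paper silently absorbs (in the paper's chain of estimates, the step passing from the $\hat f(z,u)$-integral to the expression with $|\xi(z)|$ is really a ``$\le$'' rather than an ``$=$'', for precisely the reason you identify).
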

\begin{proof}
	Let $u\in S_+$ and let $\hat{k}(z,x)$ be given by (\ref{eq12}). We introduce the following truncation of $\hat{k}(z,\cdot)$:
	\begin{equation}\label{eq19}
		e_+(z,x)=\left\{
			\begin{array}{ll}
				0 & \mbox{if}\ x<0 \\
				\hat{k}(z,x) & \mbox{if}\ 0\leq x\leq u(z) \\
				\hat{k}(z,u(z)) & \mbox{if}\ u(z)<x.
			\end{array}
		\right.
	\end{equation}
	
	This is a Carath\'eodory function. We set $E_+(z,x)=\int^x_0e_+(z,s)ds$ and consider the $C^1$-functional $\Psi_+:W^{1,p}(\Omega)\rightarrow\RR$ defined by
	$$
	\begin{array}{rr}
		\Psi_+(u)=\int_\Omega G(Du)dz + \frac{1}{p}\int_\Omega[|\xi(z)|+1]|u|^pdz + \frac{1}{p}\int_{\partial\Omega}\beta(z)|u|^pd\sigma - \int_\Omega E_+(z,u)dz \\
		\mbox{for all}\ u\in W^{1,p}(\Omega).
	\end{array}
	$$
	
	Evidently, $\Psi_+(\cdot)$ is coercive (see (\ref{eq19})) and sequentially weakly lower semicontinuous. So, we can find $\hat{u}^*\in W^{1,p}(\Omega)$ such that
	\begin{equation}\label{eq20}
		\Psi_+(\hat{u}^*)=\inf\left\{\Psi_+(u):u\in W^{1,p}(\Omega)\right\}.
	\end{equation}
	
	As in the proof of Proposition \ref{prop5}, using hypotheses $H(a)(iv)$ and $H(f)(iii)$, we show that
	$$
	\begin{array}{ll}
		& \Psi_+(\hat{u}^*)<0 = \Psi_+(0), \\
		\Rightarrow & \hat{u}^*\neq0.
	\end{array}
	$$
	
	From (\ref{eq20}) we have
	\begin{eqnarray}\label{eq21}
			&&\Psi_+'(\hat{u}^*)=0,\nonumber \\
			&\Rightarrow & \langle A(\hat{u}^*),h\rangle + \int_\Omega[|\xi(z)|+1]|\hat{u}^*|^{p-2}\hat{u}^*hdz + \int_{\partial\Omega}\beta(z)|\hat{u}^*|^{p-2}\hat{u}^*hd\sigma =\nonumber\\
			&&\hspace{1cm} \int_\Omega e_+(z,\hat{u}^*)hdz\ \mbox{for all}\ h\in W^{1,p}(\Omega).
	\end{eqnarray}
	
	In (\ref{eq21}), we choose $h=-(u^*)^-\in W^{1,p}(\Omega)$. Then using Lemma \ref{lem2}(c), we have
	$$
	\begin{array}{ll}
		& \frac{c_1}{p-1}||D(\hat{u}^*)^-||^p_p + \int_\Omega[|\xi(z)+1|]((\hat{u}^*)^-)^pdz\leq0\ \mbox{(see hypothesis $H(\beta)$ and (\ref{eq19}))} \\
		\Rightarrow & \hat{u}^*\geq0, \ \hat{u}^*\neq0.
	\end{array}
	$$
	
	Next, in (\ref{eq21}) we choose $h=(\hat{u}^*-u)^+\in W^{1,p}(\Omega)$. We have
	$$
	\begin{array}{ll}
		& \langle A(\hat{u}^*),(\hat{u}^*-u)^+\rangle + \int_\Omega[|\xi(z)+1|](\hat{u}^*)^{p-1}(\hat{u}^*-u)^+dz + \int_{\partial\Omega}\beta(z)(u^*)^{p-1}(\hat{u}^*-u)^+d\sigma \\
		= & \int_\Omega[\mu u^{q-1}-c_6 u^{r-1}+u^{p-1}](\hat{u}^*-u)^+dz\ \mbox{(see (\ref{eq19}), (\ref{eq12}), (\ref{eq10}) and recall that $u\in S^+$)} \\
		\leq & \int_\Omega[\hat{f}(z,u)+u^{p-1}](\hat{u}^*-u)^+dz\ \mbox{(see (\ref{eq9}))} \\
		= & \langle A(u),(\hat{u}^*-u)^+\rangle + \int_\Omega[|\xi(z)+1|]u^{p-1}(\hat{u}^*-u)^+dz + \int_{\partial\Omega}\beta(z)u^{p-1}(\hat{u}^*-u)^+d\sigma\ \\
		& \mbox{(since $u\in S^+$)} \\
		\Rightarrow & \hat{u}^*\leq u.
	\end{array}
	$$
	
	So, we have proved that
	$$
	\hat{u}^*\in[0,u]=\{y\in W^{1,p}(\Omega):0\leq y(z)\leq u(z) \ \mbox{for almost all}\ z\in\Omega\}.
	$$
	
	This fact, together with (\ref{eq10}), (\ref{eq12}), (\ref{eq19}), (\ref{eq21}), imply that
	$$
	\begin{array}{ll}
		& -{\rm div}\,a(D\hat{u}^*z) + |\xi(z)|\hat{u}^*(z)^{p-1} = k(z,\hat{u}^*(z))\ \mbox{for almost all}\ z\in\Omega, \\
		& \frac{\partial\hat{u}^*}{\partial n_a} + \beta(z)(\hat{u}^*)^{p-1}=0\ \mbox{on}\ \partial\Omega\ \mbox{(see Papageorgiou  \& R\u{a}dulescu \cite{17})}, \\
		\Rightarrow & \hat{u}^*=u^*\ \mbox{(see Proposition \ref{prop5})}, \\
		\Rightarrow & u^*\leq u\ \mbox{for all}\ u\in S^+.
	\end{array}
	$$
	
	Similarly, we show that
	$$
	v\leq v^*\ \mbox{for all}\ v\in S^-.
	$$
This completes the proof.
\end{proof}

Now we can establish the existence of extremal constant sign solutions for problem (\ref{eq18}), that is, we show that problem (\ref{eq18}) has a smallest positive solution and a biggest negative solution.
\begin{prop}\label{prop7}
	If hypotheses $H(a), H(\beta), H(\xi), H(f)$ hold, then there exists a smallest positive solution $u_+\in S^+\subseteq D_+$ and a biggest negative solution $v_+\in S^-\subseteq-D_+$.
\end{prop}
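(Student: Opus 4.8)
The plan is to obtain the smallest positive solution as the limit of a decreasing minimizing sequence in $S^+$, exploiting the fact that $S^+$ is downward directed together with the uniform lower bound $u^* \leq u$ for all $u \in S^+$ provided by Proposition \ref{prop6}. First I would invoke a standard result (see Hu \& Papageorgiou, or Gasinski \& Papageorgiou) that a downward directed set in an ordered Banach space such as $W^{1,p}(\Omega)$ admits a decreasing sequence $\{u_n\}_{n\geq1}\subseteq S^+$ with $\inf_n u_n = \inf S^+$ in the pointwise (almost everywhere) sense; because $S^+$ is downward directed one can in fact choose the $u_n$ so that $u_{n+1}\leq u_n$ for all $n$. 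By construction $0 \leq u^* \leq u_n \leq \eta$ for all $n$, so $\{u_n\}_{n\geq1}$ is bounded in $L^\infty(\Omega)$, and by monotone convergence $u_n \to u_+$ in $L^p(\Omega)$ for some $u_+$ with $u^* \leq u_+ \leq \eta$.

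Next I would pass to the limit in the weak formulation. Each $u_n$ satisfies
$$
\langle A(u_n),h\rangle + \int_\Omega \xi(z)u_n^{p-1}h\,dz + \int_{\partial\Omega}\beta(z)u_n^{p-1}h\,d\sigma = \int_\Omega \hat{f}(z,u_n)h\,dz
$$
for all $h \in W^{1,p}(\Omega)$. Testing with $h = u_n$ and using Lemma \ref{lem2}(c) together with $H(\xi)$, $H(\beta)$ and the boundedness of $\{u_n\}$ in $L^\infty(\Omega)$ (which controls the reaction term via $H(f)(ii)$ and \eqref{eq5}), I get that $\{u_n\}_{n\geq1}$ is bounded in $W^{1,p}(\Omega)$. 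Hence, along a subsequence, $u_n \xrightarrow{w} u_+$ in $W^{1,p}(\Omega)$ and $u_n \to u_+$ in $L^p(\Omega)$ and in $L^p(\partial\Omega)$ (using the compactness of the trace map) and $u_n(z)\to u_+(z)$ a.e.; the full sequence converges by monotonicity. Testing the equation for $u_n$ with $h = u_n - u_+$ and using the convergences, the reaction and potential and boundary terms vanish in the limit, so $\limsup_{n\to\infty}\langle A(u_n), u_n - u_+\rangle \leq 0$, and the $(S)_+$ property from Proposition \ref{prop4} gives $u_n \to u_+$ strongly in $W^{1,p}(\Omega)$. Passing to the limit in the weak formulation then shows $u_+$ is a weak solution of \eqref{eq18}. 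Since $u^* \leq u_+$ we have $u_+ \neq 0$; the argument already used in Proposition \ref{prop5}/\ref{prop6} (the bound $u_+ \leq \eta$ is automatic, then $L^\infty$-regularity of Papageorgiou \& R\u{a}dulescu \cite{19}, the regularity theory of Lieberman \cite{8}, and the nonlinear maximum principle of Pucci \& Serrin \cite{23}) gives $u_+ \in D_+$. Thus $u_+ \in S^+$.

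Finally, to see $u_+$ is the \emph{smallest} element of $S^+$: given any $u \in S^+$, downward directedness lets us replace the minimizing sequence by one that also lies below $u$ (or: for each $n$ pick $\tilde u_n \in S^+$ with $\tilde u_n \leq u_n$ and $\tilde u_n \leq u$), so passing to the limit yields an element of $S^+$ below both $u_+$ and $u$; but the limit equals $u_+$ by the pointwise infimum characterization, hence $u_+ \leq u$. The negative case is symmetric: $S^-$ is upward directed, $v \leq v^*$ for all $v \in S^-$ by Proposition \ref{prop6}, and the same argument produces a biggest negative solution $v_+ \in S^- \subseteq -D_+$. I expect the main technical point to be the careful extraction of the monotone minimizing sequence whose pointwise infimum is $\inf S^+$ — i.e.\ justifying that the lattice-theoretic infimum of the directed set is attained along a sequence and is itself a solution — rather than the limiting procedure, which is routine given Proposition \ref{prop4} and the $L^\infty$ bounds.
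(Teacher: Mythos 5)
Your proposal is correct and follows essentially the same route as the paper: extract a decreasing sequence in $S^+$ realizing $\inf S^+$ via the directedness result of Hu \& Papageorgiou, pass to the limit using the $(S)_+$-property of $A$ from Proposition \ref{prop4}, and invoke Proposition \ref{prop6} to get $u^*\leq u_+$ and hence $u_+\neq 0$, with the symmetric argument for $v_-$. The only (harmless) additions are your explicit justification of the $W^{1,p}$-bound by testing with $h=u_n$ and the appeal to monotone convergence, both of which the paper leaves implicit.
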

\begin{proof}
	Invoking Lemma 3.10 of Hu \& Papageorgiou \cite[p. 178]{6}, we can find a decreasing sequence $\{u_n\}_{n\geq1}\subseteq S^+$ such that
	$$
	\inf S^+=\inf_{n\geq1}u_n.
	$$
	
	Evidently, $\{u_n\}_{n\geq1}\subseteq W^{1,p}(\Omega)$ is bounded. So, we may assume that
	\begin{equation}\label{eq22}
		u_n\xrightarrow{w}u_+\ \mbox{in}\ W^{1,p}(\Omega)\ \mbox{and}\ u_n\rightarrow u_+\ \mbox{in}\ L^p(\Omega)\ \mbox{and}\ L^p(\partial\Omega).
	\end{equation}
	
	We have
	\begin{equation}\label{eq23}
		\begin{array}{r}
			\langle A(u_n),h\rangle + \int_\Omega\xi(z)u^{p-1}_nhdz + \int_{\partial\Omega}\beta(z)u^{p-1}_nhd\sigma = \int_\Omega\hat{f}(z,u_n)hdx \\
			\mbox{for all}\ h\in W^{1,p}(\Omega), n\in\NN.
		\end{array}
	\end{equation}
	
	In (\ref{eq23}) we choose $h=u_n-u_+\in W^{1,p}(\Omega)$, pass to the limit as $n\rightarrow\infty$ and use (\ref{eq22}). Then
	\begin{equation}\label{eq24}
		\begin{array}{ll}
			& \lim_{n\rightarrow\infty}\langle A(u_n),u_n-u_+\rangle = 0, \\
			\Rightarrow & u_n\rightarrow u_+\ \mbox{in}\ W^{1,p}(\Omega)\ \mbox{(see Proposition \ref{prop4})}.
		\end{array}
	\end{equation}
	
	In (\ref{eq23}) we pass to the limit as $n\rightarrow\infty$ and use (\ref{eq24}). Then
	\begin{equation}\label{eq25}
		\langle A(u_+),h\rangle + \int_\Omega\xi(z)u_+^{p-1}hdz + \int_{\partial\Omega}\beta(z)u_+^{p-1}hd\sigma = \int_\Omega\hat{f}(z,u_+)hdz\ \mbox{for all}\ h\in W^{1,p}(\Omega).
	\end{equation}
	
	From Proposition \ref{prop6}, we have
	\begin{equation}\label{eq26}
		\begin{array}{ll}
			& u^*\leq u_n\ \mbox{for all}\ n\in\NN, \\
			\Rightarrow & u^*\leq u_+\ \mbox{(see (\ref{eq24})), hence}\ u_+\neq0.
		\end{array}
	\end{equation}
	
	It follows from (\ref{eq25}) and (\ref{eq26}) that
	$$
	u_+\in S^+\subseteq D_+,\ u_+=\inf S^+.
	$$
	
	Similarly, we produce
	$$
	v_-\in S^-\subseteq -D_+,\ v_-=\sup S^-.
	$$
\end{proof}

Let $\tau>||\xi||_\infty$ and consider the following truncation-perturbation of $\hat{f}(z,\cdot)$:
\begin{equation}\label{eq27}
	f_0(z,x)=\left\{
		\begin{array}{ll}
			\hat{f}(z,v_-(z)) + \tau|v_-(z)|^{p-2}v_-(z) & \mbox{if}\ x<v_-(z) \\
			\hat{f}(z,x)  + \tau|x|^{p-2}x & \mbox{if}\ v_-(z)\leq x\leq u_+(z) \\
			\hat{f}(z,u_+(z)) + \tau u_+(z)^{p-1} & \mbox{if}\ u_+(z)<x.
		\end{array}
	\right.
\end{equation}

We set $F_0(z,x)=\int_0^xf_0(z,s)ds$ and consider the $C^1$-functional $\varphi_0:W^{1,p}(\Omega)\rightarrow\RR$ defined by
$$
\begin{array}{rr}
	\varphi_0(u) = \int_\Omega G(Du)dz + \frac{1}{p}\int_\Omega[\xi(z)+\tau]|u|^pdz + \frac{1}{p}\int_\Omega\beta(z)|u|^pd\sigma - \int_\Omega F_0(z,u)dz \\
	\mbox{for all}\ u\in W^{1,p}(\Omega).
\end{array}
$$

Evidently, $\varphi_0(\cdot)$ is coercive (see (\ref{eq27}) and recall that $\tau>||\xi||_\infty$). So, $\varphi_0(\cdot)$ is bounded below and satisfies the PS-condition (see Marano \& Papageorgiou \cite{14, mara}).
\begin{prop}\label{prop8}
	If hypotheses $H(a),H(\xi),H(\beta),H(f)$ hold and $V\subseteq W^{1,p}(\Omega)$ is a finite dimensional linear subspace, then there exists $\rho_V>0$ such that
	$$
	\sup\left\{\varphi_0(u):u\in V,||u||=\rho_V\right\}<0.
	$$
\end{prop}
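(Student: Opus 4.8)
The idea is standard for Kajikiya-type arguments: on a finite-dimensional subspace $V$ of $W^{1,p}(\Omega)$, for $u$ with small norm the functional $\varphi_0$ is dominated by its ``lowest-order'' part, which is negative because the reaction term beats the leading quadratic-type terms near zero. Concretely, since all norms on the finite-dimensional space $V$ are equivalent, I would fix $V$ and work with $u = tv$ where $v \in V$ is on the unit sphere (in $\|\cdot\|$) and $t > 0$ is small; uniformity over the sphere is what makes the finite-dimensionality essential.

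First I would record the structural estimates near zero. From $H(a)(iv)$ (as already used to get \eqref{eq14}) we have $G(y) \le \frac{c_7}{q}|y|^q$ for $|y| \le \delta$. Next, from $H(f)(iii)$, given any $\mu > 0$ there is $\delta = \delta(\mu) \in (0,\varepsilon)$ with $\hat f(z,x)x \ge \mu|x|^q$ for $|x| \le \delta$, hence (as in \eqref{eq9}) $\hat F_0$-type primitive estimates: integrating, $F_0(z,x) \ge \frac{\mu}{q}|x|^q - c\,|x|^r$ for all $x$ in the relevant range once $r > p$ is fixed. The key point is that for $u \in V$ with $\|u\|$ small, $u(z)$ and $Du(z)$ are uniformly small (pointwise) because on $V$ the $C^1(\overline\Omega)$-norm is equivalent to $\|\cdot\|$; so for $t$ small, $tu$ lands in the region $v_- \le tu \le u_+$ where $F_0(z,\cdot)$ equals $\hat F(z,\cdot) + \frac{\tau}{p}|\cdot|^p$, and also $|D(tu)| \le \delta$ and $|tu| \le \delta$. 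Then
$$
\varphi_0(tu) \le \frac{c_7}{q}t^q\|Du\|_q^q + \frac{t^p}{p}\int_\Omega[\xi(z)+\tau]|u|^p\,dz + \frac{t^p}{p}\int_{\partial\Omega}\beta(z)|u|^p\,d\sigma - \frac{\mu}{q}t^q\|u\|_q^q + c\,t^r\|u\|_r^r.
$$
Since $q \le p < r$ and $t \in (0,1)$, the $t^p$ and $t^r$ terms are of higher order than $t^q$; using norm-equivalence on $V$ to bound all the $u$-dependent constants uniformly over $\{u \in V : \|u\| = 1\}$, I get
$$
\varphi_0(tu) \le t^q\big[c_{10} - \mu c_{11}\big] + o(t^q)
$$
with $c_{10}, c_{11} > 0$ depending only on $V$ (not on the particular $u$ on the unit sphere). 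Choosing $\mu > c_{10}/c_{11}$ fixes $\delta$, and then choosing $t = \rho_V > 0$ small enough (so that $\rho_V u$ stays in the good pointwise region and the $o(t^q)$ term is absorbed) yields $\varphi_0(u) < 0$ uniformly for $u \in V$ with $\|u\| = \rho_V$, which is exactly the claim.

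The main obstacle — really the only delicate point — is making the choice of $t = \rho_V$ uniform over the unit sphere of $V$ and simultaneously small enough that $tu$ lies in the middle branch of the truncation \eqref{eq27} (i.e. $v_-(z) \le tu(z) \le u_+(z)$ pointwise) and in the region $|tu|,|D(tu)| \le \delta$. This is where finite-dimensionality of $V$ is used twice: once to get equivalence of $\|\cdot\|$ with $\|\cdot\|_{C^1(\overline\Omega)}$ so that norm control gives pointwise control, and once to get compactness of the unit sphere so that the constants $c_{10}, c_{11}$ and the threshold for $t$ can be taken uniform. Since $u_+ \in D_+$ and $v_- \in -D_+$ (Proposition \ref{prop7}), there is $c_{12} > 0$ with $-c_{12} \le v_- \le u_+$... wait, more precisely $v_-(z) < 0 < u_+(z)$ on $\overline\Omega$ with both bounded away from $0$ only in the interior; near $\partial\Omega$ they may vanish, but $\|tu\|_{C(\overline\Omega)} \to 0$ and one compares using the behaviour of $u_+, v_-$ via Harnack/boundary estimates, or simply observes that it suffices to have $tu$ in $[-\eta,\eta]$ and small, since on that range $\hat f$ and its perturbation already give the needed lower bound regardless of the exact truncation branch — so in fact one only needs $|tu| \le \delta$, sidestepping the comparison with $u_+, v_-$ entirely. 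With that simplification the argument is routine once the near-zero estimates above are in hand.
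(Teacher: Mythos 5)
Your overall strategy is the right one---on a finite-dimensional $V$, norm equivalence gives pointwise control of $u$, the reaction primitive $F_0$ grows at least like $\frac{\mu}{q}|x|^q$ near zero (by $H(f)(iii)$), and the competing terms can be pushed to order $\|u\|^q$ so that choosing $\mu$ large wins. That is exactly the skeleton of the paper's proof. However, there are two concrete problems with your execution, and both are avoided by a cleaner choice of estimates in the paper.

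First, you use the local bound $G(y)\le\frac{c_7}{q}|y|^q$ for $|y|\le\delta$ (the analogue of \eqref{eq14}) and therefore need $|D(tu)(z)|\le\delta$ pointwise. To get this you invoke ``equivalence of $\|\cdot\|$ with $\|\cdot\|_{C^1(\overline\Omega)}$ on $V$,'' but this is not available: a finite-dimensional subspace $V\subseteq W^{1,p}(\Omega)$ need not lie in $C^1(\overline\Omega)$, so the $C^1$-norm is not even defined on $V$, let alone equivalent to $\|\cdot\|$. The paper sidesteps this entirely by using the \emph{global} estimate $G(y)\le c_{10}\bigl(|y|^q+|y|^p\bigr)$ for all $y\in\RR^N$ (equation \eqref{eq29}, which follows from $H(a)(iv)$ together with Corollary \ref{cor3}). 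With the global bound, $\int_\Omega G(Du)\,dz$ is controlled by $\|Du\|_q^q+\|Du\|_p^p\le c\|u\|^q$ for $\|u\|\le 1$ (using $q\le p$ and $\Omega$ bounded), and no pointwise gradient information is needed. This is the step you should change.

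Second, your worry that $u_+$ and $v_-$ ``may vanish near $\partial\Omega$'' is a misreading of $D_+$. By definition, $D_+=\{u\in C_+:u(z)>0\ \text{for all}\ z\in\overline\Omega\}$, so $u_+\in D_+$ is strictly positive on all of $\overline\Omega$, including the boundary (this is a Robin problem, not Dirichlet). Hence $m_0=\min\bigl\{\min_{\overline\Omega}u_+,\,-\max_{\overline\Omega}v_-\bigr\}>0$, and by requiring $\delta\le\epsilon_0:=\min\{\epsilon,m_0\}$ one guarantees that $|u(z)|\le\delta$ forces $v_-(z)\le u(z)\le u_+(z)$, so $u$ lies in the middle branch of the truncation \eqref{eq27}. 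No Harnack or boundary estimates are needed, and your attempted ``sidestep'' (using only $|tu|\le\delta$ without the branch comparison) does not actually work: the identity $F_0(z,x)=\hat F(z,x)+\frac{\tau}{p}|x|^p$ used in \eqref{eq28} holds \emph{only} on the middle branch $v_-(z)\le x\le u_+(z)$, so you do need the comparison---it just happens to be automatic once $\delta\le m_0$.
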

\begin{proof}
	Recall that $u_+\in D_+$ and $v_-\in-D_+$. So, $m_0=\min\{\min_{\overline\Omega}u_+,-\max_{\overline\Omega}v_-\}>0$. We set $\epsilon_0=\min\{\epsilon,m_0\}$ (where $\epsilon>0$ is from (\ref{eq4})). On account of hypothesis $H(f)(iii)$, given any $\mu>0$, we can find $\delta=\delta(\mu)>0\in(0,\epsilon_0)$ such that
	\begin{equation}\label{eq28}
		\begin{array}{ll}
			F_0(z,x)=\hat{F}(z,x) + \frac{\tau}{p}|x|^p & = F(z,x) + \frac{\tau}{p}|x|^p \\
			& \geq \frac{\mu}{q}|x|^q+\frac{\tau}{p}|x|^p \\
			& \mbox{(for almost all $z\in\Omega$, and all $|x|\leq\delta$, see (\ref{eq4}) and (\ref{eq27}))}.
		\end{array}
	\end{equation}
	
	Moreover, on account of hypothesis $H(a)(iv)$ and Corollary \ref{cor3}, we have
	\begin{equation}\label{eq29}
		G(y)\leq c_{10}[|y|^q+|y|^p]\ \mbox{for some}\ c_{10}>0,\ \mbox{and all}\ y\in\RR^N.
	\end{equation}
	
	Since the subspace $V\subseteq W^{1,p}(\Omega)$ is finite dimensional, all norms are equivalent. So, we can find $\rho_V\in(0,1]$ such that
	\begin{equation}\label{eq30}
		u\in V,\ ||u||\leq \rho_V\Rightarrow|u(z)|\leq\delta\ \mbox{for all}\ z\in\overline\Omega.
	\end{equation}
	
	Then for every $u\in V$ with $||u||\leq \rho_V$, we have
	$$
	\begin{array}{rr}
		\varphi_0(u)\leq c_{11}||u||^q - \mu c_{12}||u||^q\ \mbox{for some}\ c_{11}, c_{12}>0 \\
		\mbox{(see (\ref{eq27}), (\ref{eq28}), (\ref{eq29}), (\ref{eq30}) and recall that $\rho_V\leq1,\ q\leq p$)}
	\end{array}
	$$
	
	Since $\mu>0$ is arbitrary, we choose $\mu>\frac{c_{11}}{c_{12}}$ and conclude that
	$$\varphi_0(u)<0\ \mbox{for all}\ u\in V\ \mbox{with}\ ||u||=\rho_V.$$
The proof is now complete.
\end{proof}

We now obtain the following multiplicity theorem for the nodal solutions of problem (\ref{eq1}).
\begin{theorem}\label{th9}
	Assume that hypotheses $H(a), H(\xi), H(\beta), H(f)$ hold. Then there exists a sequence $\{u_n\}_{n\geq 1}\subseteq C^1(\overline\Omega)$ of nodal solutions of problem (\ref{eq1}) such that
	$$
	u_n\rightarrow0\ \mbox{in}\ C^1(\overline\Omega).
	$$
\end{theorem}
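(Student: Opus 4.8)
The plan is to apply Kajikiya's theorem (Theorem~\ref{th1}) to the functional $\varphi_0$ associated with the doubly truncated-perturbed reaction $f_0$ from (\ref{eq27}), and then to transfer the critical points back to genuine solutions of the original problem (\ref{eq1}) via the chain of order constraints already established.

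First I would verify that $\varphi_0$ meets all the hypotheses of Theorem~\ref{th1} with $X=W^{1,p}(\Omega)$. We already know from the discussion preceding Proposition~\ref{prop8} that $\varphi_0$ is coercive, bounded below, and satisfies the PS-condition; clearly $\varphi_0(0)=0$. For the evenness one uses that $\hat f(z,\cdot)$ is odd and that the extremal solutions satisfy $v_-=-u_+$, so that the truncation $f_0(z,\cdot)$ in (\ref{eq27}) is odd in $x$; hence $F_0(z,\cdot)$ is even and so is $\varphi_0$. (If the symmetry $v_-=-u_+$ is not immediate, one can instead note that the truncation bounds $[v_-,u_+]$ form a symmetric interval up to the sign-changing structure, or simply replace $v_-$ by $-u_+$ in the truncation, which is legitimate since $-u_+\in S^-$ would follow from oddness of $\hat f$; in any case the truncation can be arranged to be odd.) Finally, Proposition~\ref{prop8} supplies exactly the geometric condition: for each finite-dimensional subspace $V_n$ there is $\rho_n>0$ with $\sup\{\varphi_0(u):u\in V_n,\ \|u\|=\rho_n\}<0$. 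Thus Kajikiya's theorem yields a sequence $\{u_n\}_{n\geq1}\subseteq W^{1,p}(\Omega)\setminus\{0\}$ with $\varphi_0'(u_n)=0$, $\varphi_0(u_n)\leq0$, and $u_n\to0$ in $W^{1,p}(\Omega)$.

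Next I would show each $u_n$ is in fact a solution of (\ref{eq1}). Since $\varphi_0'(u_n)=0$, choosing the test function $h=(u_n-u_+)^+$ and arguing as in Proposition~\ref{prop6} (comparison with $u_+\in S^+$, using $A(u_+)$, hypothesis $H(\beta)$, and the monotonicity of $A$ together with the perturbation term $\tau|\cdot|^{p-2}\cdot$) gives $u_n\leq u_+$; symmetrically, testing with $h=(v_--u_n)^-$ gives $v_-\leq u_n$. Hence $u_n\in[v_-,u_+]$, so by (\ref{eq27}) the perturbation cancels and $u_n$ solves (\ref{eq18}), i.e. $-\operatorname{div}a(Du_n)+\xi(z)|u_n|^{p-2}u_n=\hat f(z,u_n)$ with the Robin condition. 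Because $u_n\to0$ in $W^{1,p}(\Omega)$, nonlinear regularity (Papageorgiou \& R\u adulescu \cite{19} for the $L^\infty$ bound, then Lieberman \cite{8}) gives $u_n\in C^1(\overline\Omega)$ and, crucially, $u_n\to0$ in $C^1(\overline\Omega)$; in particular $\|u_n\|_\infty\to0$, so for $n$ large we have $|u_n(z)|\leq\varepsilon$ for all $z\in\overline\Omega$, whence by (\ref{eq4}) $\hat f(z,u_n)=f(z,u_n)$ and $u_n$ is a genuine solution of problem (\ref{eq1}).

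It remains to see that the $u_n$ are nodal and that infinitely many of them are distinct. Each $u_n$ lies in the order interval $[v_-,u_+]$ where $v_-$ is the \emph{biggest} negative solution and $u_+$ the \emph{smallest} positive solution of (\ref{eq18}) (Proposition~\ref{prop7}); therefore $u_n$ cannot be a nonzero solution of one sign, for a positive $u_n$ would contradict minimality of $u_+$ unless $u_n=u_+$, but then it would not tend to zero, and similarly on the negative side --- more precisely, once $n$ is large $u_n\neq u_+$ and $u_n\neq v_-$ (since $u_n\to0$ while $u_\pm\in\pm D_+$ are bounded away from $0$), and then if $u_n\geq0$, $u_n\in S^+$ with $u_n\lneq u_+$ contradicts $u_+=\inf S^+$; likewise $u_n\leq0$ is impossible. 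Hence $u_n$ is sign-changing for all large $n$. Since $u_n\to0$ in $C^1(\overline\Omega)$ with $u_n\neq0$, the sequence takes infinitely many distinct values, so after discarding finitely many terms and relabeling we obtain the desired sequence $\{u_n\}_{n\geq1}\subseteq C^1(\overline\Omega)$ of distinct nodal solutions of (\ref{eq1}) with $u_n\to0$ in $C^1(\overline\Omega)$. I expect the main obstacle to be the bookkeeping around evenness of $\varphi_0$ (ensuring the truncation interval is genuinely symmetric, i.e. that $v_-=-u_+$) and the argument that the Kajikiya critical points, a priori living only in $W^{1,p}(\Omega)$, land inside $[v_-,u_+]$ and then upgrade to $C^1$-convergence; everything else is a routine assembly of the preceding propositions.
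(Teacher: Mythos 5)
Your proposal is correct and follows essentially the same route as the paper: Kajikiya's theorem applied to $\varphi_0$, confinement of the resulting critical points to the order interval $[v_-,u_+]$, upgrade of $W^{1,p}$-convergence to $C^1$-convergence via nonlinear regularity, and nodality from the extremality of $u_+$ and $v_-$. You are right to flag that evenness of $\varphi_0$ requires $v_-=-u_+$, a point the paper takes as read (it follows because $\hat f(z,\cdot)$ is odd, so $S^-=-S^+$ and hence $\sup S^-=-\inf S^+$), and the $W^{1,p}\Rightarrow C^1$ upgrade you identify as the main technical step is handled in the paper by the uniform $L^\infty$-bound coming from $u_n\in[v_-,u_+]$, Lieberman's uniform $C^{1,\gamma}(\overline\Omega)$-estimate, and the compact embedding $C^{1,\gamma}(\overline\Omega)\hookrightarrow C^1(\overline\Omega)$.
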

\begin{proof}
	We know that $\varphi_0(\cdot)$ is even, bounded below, satisfies the PS-condition, and $\varphi_0(0)=0$. Moreover, using (\ref{eq27}) as before, we can check that
	\begin{equation}\label{eq31}
		K_{\varphi_0}\subseteq[v_-,u_+]\cap C^1(\overline\Omega).
	\end{equation}
	
	The aforementioned properties of $\varphi_0(\cdot)$ and Proposition \ref{prop8} permit us to apply Theorem \ref{th1}. So, we can find a sequence $\{u_n\}_{n\geq1}\subseteq W^{1,p}(\Omega)$ such that
	\begin{equation}\label{eq32}
		u_n\in K_{\varphi_0}\subseteq [v_-,u_+]\cap C^1(\overline\Omega)\ \mbox{(see (\ref{eq31})) and}\ u_n\rightarrow0\ \mbox{in}\ W^{1,p}(\Omega).
	\end{equation}
	
	The nonlinear regularity theory of Lieberman \cite{8} implies that we can find $\gamma\in(0,1)$ and $c_{13}>0$ such that
	\begin{equation}\label{eq33}
		u_n\in C^{1,\gamma}(\overline\Omega),\ ||u_n||_{C^{1,\gamma}(\overline\Omega)}\leq c_{13}\ \mbox{for all}\ n\in\NN.
	\end{equation}
	
	We know that $C^{1,\gamma}(\overline\Omega)$ is  compactly embedded in $C^1(\overline\Omega)$. So, it follows from (\ref{eq32}) and (\ref{eq33}) that
	$$
	\begin{array}{ll}
		& u_n\rightarrow0\ \mbox{in}\ C^1(\overline\Omega),\\
		\Rightarrow & -\epsilon_0\leq u_n(z)\leq\epsilon_0\ \mbox{for all}\ z\in\overline\Omega,\ \mbox{and all}\ n\geq n_0 \\
		& \mbox{(recall that $\epsilon_0=\min\{\epsilon,m_0\}>0$, see the proof of Proposition \ref{prop8})}.
	\end{array}
	$$
	
	From (\ref{eq4}), (\ref{eq32}) and the extremality of $u_+,v_-$, we get that $\{u_n\}_{n\geq1}\subseteq C^1(\overline\Omega)$ are nodal solutions of (\ref{eq1}) and we have $u_n\rightarrow0$ in $C^1(\overline\Omega)$.
\end{proof}

\medskip
{\bf Acknowledgements.}  This research was supported by the Slovenian Research Agency grants
P1-0292, J1-8131, J1-7025, N1-0064, and N1-0083. V.D.~R\u adulescu acknowledges the support through a grant of the Romanian Ministry of Research and Innovation, CNCS--UEFISCDI, project number PN-III-P4-ID-PCE-2016-0130,
within PNCDI III.

\end{document}